\newcommand{\showcomments}{no}
\newsavebox{\commentbox}
\long\def\Restate#1#2#3#4{
\medskip\par\noindent
{\bf #1 \ref{#2} #3} {\it #4}\par\medskip }
\newtheorem{thm}{Theorem}[section]
\newtheorem{lem}[thm]{Lemma}
\newtheorem{prop}[thm]{Proposition}
\newtheorem{thmi}{Theorem}
\newtheorem{questioni}[thmi]{Question}
\newtheorem{conji}[thmi]{Conjecture}
\theoremstyle{definition}
\newtheorem{defn}[thm]{Definition}
\newtheorem{rem}[thm]{Remark}
\newtheorem{exmp}[thm]{Example}
\newtheorem{notation}[thm]{Notation}
\newtheorem{claim}{Claim}
\newtheorem{claim*}{Claim}
\DeclareMathOperator{\diam}{\textup{\textsf{diam}}}
\newcommand{\neb}{\mathcal N}
\def\MCG{\mathcal{MCG}}
\newcommand{\Rmnum}[1]{\mathbf{{\expandafter\@slowromancap\romannumeral #1@}}}
\newcommand{\mc}{\mathcal}
\def\nbhd{{\mathcal N}}
\let\oldmarginpar\marginpar
\renewcommand\marginpar[1]{\-\oldmarginpar[\raggedleft\footnotesize #1]{\raggedright\footnotesize #1}}
\newcommand{\tsh}[1]{\left\{\kern-.7ex\left\{#1\right\}\kern-.7ex\right\}}
\newcommand{\Tsh}[2]{\tsh{#2}_{#1}}
\newcommand{\ignore}[2]{\Tsh{#2}{#1}}
\newcommand{\co}{\colon}
\newcounter{enumitemp}
\newcommand{\dist}{\textup{\textsf{d}}}
\newcommand{\cuco}[1]{{\mathcal #1}}
\newcommand{\fontact}{{\mathcal C}}
\newcommand{\gate}{\mathfrak g}
\newcommand{\propnest}{\sqsubsetneq}
\newcommand{\nest}{\sqsubseteq}
\newcommand{\orth}{\bot}
\newcommand{\transverse}{\pitchfork}
\newcommand{\relevant}{\mathbf{Rel}}
\newcommand{\X}{\mathcal X}
\newcommand{\s}{\mathfrak S}
\newcommand{\frakS}{\mathfrak S}
\newcommand{\FU}[1]{ \mathbf F_{#1}}
\newcommand{\EU}[1]{ \mathbf E_{#1}}
\definecolor{blue(pigment)}{rgb}{0.2, 0.2, 0.6}
\newcommand{\trans}{\pitchfork}
\newcommand{\B}{\operatorname{Big}}
\newcommand{\FixedF}{\mathbb{F}}
\title[Conjugator lengths in hierarchically hyperbolic groups]{Conjugator lengths in\\ hierarchically hyperbolic groups}
\author[C. Abbott]{Carolyn Abbott}
\thanks{Abbott was supported by NSF grants DMS-1803368 and DMS-2106906.}
\address{Brandeis University}
\email{carolynabbott@brandeis.edu}
\author[J. Behrstock]{Jason Behrstock}
\thanks{Behrstock was supported by NSF grant DMS-1710890.}
\address{Lehman College and The Graduate Center, CUNY}
\email{jason.behrstock@lehman.cuny.edu}
\begin{document}

\begin{abstract} In this paper we establish upper bounds on the 
	length of the shortest conjugator between pairs of infinite order elements in a 
	wide class of groups. We obtain a general result which applies to 
	all hierarchically 
	hyperbolic groups, a class which includes mapping class groups, right-angled 
	Artin groups, Burger--Mozes-type groups, most $3$--manifold groups, and many others. 
	In this setting we establish a linear bound on 
	the length of the shortest 
	conjugator for any pair of conjugate Morse elements. For a subclass 
	of these groups, including, in particular, all 
	virtually compact special groups, we prove a sharper 
	result by obtaining a linear bound 
	on the length of the shortest conjugator between a suitable power 
	of any pair of conjugate 
	infinite order elements. \end{abstract}

\maketitle

The conjugacy length function is the minimal function which bounds
the length of a shortest conjugator between any two conjugate 
elements of a given group, in terms of the sum of the word
lengths of the elements.  When a set of elements in a group has a
linear conjugacy length function, we say that set has the linear
conjugator property.  For any subset of a group satisfying the linear
conjugator property, and given two elements of that subset, there is an
exponential-time algorithm which determines whether or not the given
elements are conjugate.  One of Dehn's classic decision problems is
the Conjugacy Problem, which asks if there is an algorithm to
decide conjugacy given any pair of elements in a given group. 
Even in groups
where the Conjugacy Problem is unsolvable for arbitrary pairs of  
elements, establishing the linear conjugator property for a particular subset allows
one to solve the Conjugacy Problem for that subset.

An early established result about hyperbolic groups is they have the linear conjugacy property \cite{Lysenok}, 
thereby providing a quantitative certification of how complicated a  
conjugator needs to be. 
Exploiting the parallels between pseudo-Anosovs in the mapping class 
group and loxodromic elements in a hyperbolic group, Masur--Minsky 
proved the  analogous result  
that the set of pseudo-Anosov elements satisfies the linear conjugator property 
\cite{MasurMinsky:II}. 
These results beg the question of whether 
shortest conjugators of ``hyperbolic-like'' elements should be linear 
in the length of the elements being conjugated (see 
Conjecture~\ref{conj:morselcp} for a precise formulation).

In the presence of non-positive curvature, the linear conjugator
property is surprisingly common, as we show in this paper, extending
an already interesting class of known examples.  Previously 
established cases of the
linear conjugator property include: mapping class groups 
(established for pseudo-Anosovs in \cite{MasurMinsky:II}, generalized 
to all elements in 
\cite{Tao:LinConjMCG}; see also \cite{BehrstockDrutu:thick2} for a later, 
unified proof); 
hyperbolic elements in semi-simple
Lie groups \cite{Sale:LinConjLie}; arbitrary elements in lamplighter
groups \cite{Sale:GeomConjLamp}; non-peripheral elements in a 
relatively hyperbolic group \cite{Bumagin:relhyplinconj};  Morse elements in groups acting on
CAT(0) spaces, \cite{BehrstockDrutu:thick2}; and Morse elements in a
prime $3$--manifold \cite{BehrstockDrutu:thick2}. Additionally,
right-angled Artin groups enjoy the linear conjugator property; this
result is not explicitly stated in the literature, but it follows from
work in \cite{Servatius:AutRAAG} (and we give a new proof below).

In light of this, we will work in the general setting of
hierarchically hyperbolic groups, introduced by
Behrstock--Hagen--Sisto \cite{BehrstockHagenSisto:HHS1}. 
This class of groups is quite
large, encompassing many groups of interest, including: mapping class
groups \cite{BehrstockHagenSisto:HHS2}; right-angled Artin groups, and
more generally fundamental groups of compact special cube complexes
\cite{BehrstockHagenSisto:HHS1} and other CAT(0) cube complexes
\cite{HagenSusse:HHScubical}; $3$--manifold groups with no Nil or Solv
components \cite{BehrstockHagenSisto:HHS2}; and lattices in products
of trees, i.e., as constructed by Burger--Mozes, Wise, and others, see
\cite{BehrstockHagenSisto:HHS1,
BurgerMozes:TreeProducts,BurgerMozes:TreeProductsIHES,
Caprace:indiscrete,JanzenWise, Rattaggi, Wise:CSC}.  There are a
number of other examples, as well, for instance groups obtained from
combination theorems, including taking graphs of hierarchically
hyperbolic groups and graph products of hierarchically hyperbolic
groups \cite{BehrstockHagenSisto:HHS2, Spriano:GraphsHHG,
BerlaiRobbio:combinationHHG}, or by taking certain quotients of a
hierarchically hyperbolic group \cite{BehrstockHagenSisto:asdim}.

The first theorem is new for most hierarchically
hyperbolic groups; it also provides a unified proof for 
the previously known cases. An element in a finitely generated group is called
\emph{Morse} if its orbit in the group is a quasi-geodesic with the
property that any $(\lambda,c)$--quasi-geodesic beginning and ending on this
orbit is completely contained within a uniformly bounded neighborhood
of this orbit.  We note that Morse elements in a group are ones whose
geometry in the Cayley graph is similar to that of the axis of a
loxodromic isometry of a hyperbolic space (via the Morse Lemma); in a
hierarchically hyperbolic group the Morse elements can be
characterized in several equivalent ways, see
\cite[Theorem~B]{AbbottBehrstock:HHSlargest}.

\begin{thmi} \label{mainthm}
Let $(G,\s)$ be a hierarchically hyperbolic group.  There exist
constants $K,C$ such that if $a,b\in G$ are Morse elements 
which are conjugate in $G$, then there exists $g\in G$ with
$ga=bg$ and
$$
|g|\leq K(|a|+|b|)+C.
$$  
\end{thmi}

One special case of the above theorem is a new proof that conjugate 
pseudo-Anosov elements in the mapping class group have a linear bound 
on the length of their shortest conjugator; this case was the main theorem of 
\cite{MasurMinsky:II}.

A natural conjecture arising from Theorem~\ref{mainthm} is the 
following generalization:
\begin{conji}\label{conj:morselcp} In a finitely generated group, the set of Morse elements 
	satisfy the linear conjugator property.
\end{conji}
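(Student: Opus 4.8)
The plan is to transplant, to an arbitrary finitely generated group $G$, the argument that establishes the linear conjugator property for hyperbolic groups, with ``$\delta$--hyperbolicity'' replaced throughout by the stability properties of Morse geodesics. Fix a finite generating set and suppose $a,b\in G$ are conjugate Morse elements, say $gag^{-1}=b$; equivalently $ga^{n}=b^{n}g$ for all $n$, so the left-orbit $g\{a^{n}\}_{n}$ coincides with $\{b^{n}g\}_{n}$. Since $a$ is Morse, its centraliser $C_{G}(a)$ is virtually cyclic and contains $\langle a\rangle$ with finite index; moreover every $ga^{m}$ (which equals $b^{m}g$) is again a conjugator from $a$ to $b$, and $|ga^{m}|=\dist(ga^{m},1)$. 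Hence, to produce a conjugator of length at most $K(|a|+|b|)+C$, it suffices to show that the orbit $g\{a^{n}\}_{n}$ passes within distance $K(|a|+|b|)+C$ of the identity $1=b^{0}$.

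Next I would compare $g\{a^{n}\}_{n}$ to the orbit $\{b^{j}\}_{j}$ by routing both through genuine geodesics. Because $a$ is Morse, the cyclic orbit $\{a^{n}\}_{n}$ is an $N$--Morse quasigeodesic with endpoints $a^{\pm\infty}$ in the Morse boundary $\partial_{M}G$, where $N$ depends only on the conjugacy class of $a$; choose a bi-infinite geodesic $\gamma_{a}$ with those two endpoints, necessarily $N'$--Morse with $N'=N'(N)$, and set $d_{a}:=d_{\mathrm{Haus}}(\{a^{n}\}_{n},\gamma_{a})$, which is finite by stability of Morse quasigeodesics. Define $\gamma_{b}$ and $d_{b}$ analogously. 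Since $x\mapsto gx$ is an isometry carrying $a^{\pm\infty}$ to $ga^{\pm\infty}$, and since $\dist(b^{n},ga^{n})=|g|$ is bounded, $g\gamma_{a}$ is a bi-infinite geodesic asymptotic to $ga^{\pm\infty}=b^{\pm\infty}$; thus $g\gamma_{a}$ and $\gamma_{b}$ are co-asymptotic $N'$--Morse geodesics.

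The engine of the argument is a fellow-traveling statement which is, in essence, built into the theory of the Morse boundary (stability of Morse geodesics; cf.\ the work of Cordes and of Cashen--Mackay): \emph{two co-asymptotic bi-infinite $N'$--Morse geodesics lie within Hausdorff distance $R=R(N')$ of each other}. Granting this and chaining Hausdorff estimates, $g\{a^{n}\}_{n}$ lies within $d_{a}$ of $g\gamma_{a}$, within $d_{a}+R(N')$ of $\gamma_{b}$, and within $d_{a}+R(N')+d_{b}$ of $\{b^{j}\}_{j}$, hence within $d_{a}+R(N')+d_{b}$ of $1=b^{0}$. By the previous paragraph this yields a conjugator of length at most $d_{a}+R(N')+d_{b}$. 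This already establishes a conjugator length bound for Morse elements in an arbitrary finitely generated group --- though a priori depending on the conjugacy classes of $a,b$ rather than linearly on $|a|+|b|$ --- which may itself be worth recording.

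The hard part --- and the reason Conjecture~\ref{conj:morselcp} is open --- is upgrading $d_{a}+R(N')+d_{b}$ to a quantity that is linear in $|a|+|b|$. This requires bounding, linearly in $|a|$, both the Morse gauge $N$ of the cyclic subgroup $\langle a\rangle$ (hence $R(N')$) and the Hausdorff defect $d_{a}$ of its periodic axis --- and likewise for $b$. In a hyperbolic group these quantities are all controlled (the Morse gauge depends only on $\delta$ and the generating set, and $d_{a}$ is linear in $|a|$ by standard estimates), which is why the classical linear bound of \cite{Lysenok} follows; in a hierarchically hyperbolic group the analogous control is exactly what the uniformity of the HHS machinery provides, and is the substance of Theorem~\ref{mainthm}. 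For a general finitely generated group neither the Morse gauge nor the stable translation length of a Morse element need be controlled by its word length, so a new idea is needed: either an input of ``Morse local-to-global'' type forcing the relevant constants to be bounded in terms of $|a|$ and $|b|$, or --- as in the hyperbolic case --- an argument that never estimates $R(N')$ but instead bounds directly the distance one must slide along $\gamma_{b}$, which is essentially one fundamental domain. The natural intermediate targets are therefore groups with the Morse local-to-global property and, more generally, acylindrically hyperbolic groups, for which a (not yet known to be linear) upper bound on conjugator lengths is already available by the results of this paper.
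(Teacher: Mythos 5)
This statement is Conjecture~\ref{conj:morselcp}: the paper poses it as an open problem and offers no proof, so there is nothing in the paper to compare your argument against, and your writeup is --- as you yourself acknowledge in the final paragraph --- not a proof. The strategy you sketch is the expected one, and it is essentially the strategy the paper does execute in the settings where it can (in Case~1 of the proof of Theorem~\ref{mainthm}, the conjugator is shortened by premultiplying $g$ by a power of $b$ so as to slide within one translation length along the axis in the relevant hyperbolic space $\fontact U$; Lysenok's argument for hyperbolic groups and Masur--Minsky's for pseudo-Anosovs have the same shape). Your intermediate steps are individually sound: $ga^{m}=b^{m}g$ is again a conjugator for every $m$; a Morse element admits a bi-infinite geodesic axis at finite Hausdorff distance from its orbit; and co-asymptotic $N'$--Morse bi-infinite geodesics fellow-travel with a constant depending only on $N'$ (Cordes, Cashen--Mackay).

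The gap you name is the genuine one, and it is worth being blunt about how much it costs. First, your ``partial result'' --- a conjugator of length at most $d_{a}+R(N')+d_{b}$ --- is close to vacuous: for a \emph{fixed} conjugate pair $a,b$ a shortest conjugator exists and has some finite length, so any finite upper bound carries content only insofar as it is uniform over inputs, and $d_{a}$, $d_{b}$, $N$ are not uniform over anything here. Second, the quantities to be controlled are not merely ``not yet bounded'' but are known to behave badly in general: the Morse gauge of $\langle a\rangle$ and the Hausdorff defect $d_{a}$ can be wildly larger than $|a|$ in a general finitely generated group (already the stable translation length of $a$, which governs how far one must slide along $\gamma_{b}$ in the last step, need not be bounded below independently of $a$; compare Lemma~\ref{translengthbound}, which supplies exactly this lower bound in the HHG setting via acylindricity). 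So the conjecture cannot be reached by tightening your estimates; it requires a new mechanism for converting the qualitative Morse hypothesis into quantitative control, which is precisely why the paper proves Theorem~\ref{mainthm} only for hierarchically hyperbolic groups and Theorem~\ref{thmi:quasitreeconjugators} only with a non-explicit function $f$.
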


Understanding exactly how the linear conjugator property is related to to 
hyperbolic properties in a group remains a rich question, and with 
Theorem~\ref{mainthm}, hierarchically hyperbolic groups provide a 
good place to study this. For instance, we conjecture that there 
exist 
hierarchically hyperbolic groups where the conjugacy length function 
is exponential. Accordingly, we don't believe the 
linear conjugator property holds for all elements in all 
hierarchically hyperbolic 
groups, but it does in a number of important examples, which leads us 
to ask:

\begin{questioni}\label{quest:whichHHGlcp} Under what conditions does a hierarchically hyperbolic group
	satisfy the linear conjugator property for all elements?
\end{questioni}

\medskip 

In Section~\ref{sec:shhg} we introduce a family of
hierarchically hyperbolic groups in which the notion of
orthogonality carries with it not just geometric implications, but also a 
useful algebraic structure.  
The way in which the 
algebraic structure is related to
orthogonality in these groups generalizes the relationship  
between commutativity and 
orthogonality in mapping class groups and compact special 
groups. 
This family is defined through a
series of conditions called the $\mathbf F_U$ stabilizers, orthogonal
decomposition, and commutative properties (see Section~\ref{sec:shhg} for
the precise definitions).

After showing in Proposition~\ref{prop:specialHHGs} 
that many groups satisfy the properties we introduce 
and that being in this family is preserved by various 
combination theorems, we then study conjugators in these groups. 
The following generalizes Theorem~\ref{mainthm} by 
removing the hypothesis that the elements are Morse:

\begin{thmi}\label{thmi:SHHGconjugators}
Let $(G,\s)$ be a
hierarchically hyperbolic group satisfying the $\mathbf F_U$
stabilizers, orthogonal decomposition, and commutative properties.
There exist constants $K,C$ and $N$ such that if $a,b\in G$ are
infinite order elements which are conjugate in $G$, then there exists $g\in G$ with $ga^N=b^Ng$ and
$$
|g|\leq K(|a|+|b|)+C.
$$ 
\end{thmi}

In particular, compact special groups (i.e., fundamental groups of
compact cube complexes which are special in the sense of Haglund--Wise
\cite{HaglundWiseSpecial}) satisfy the $\mathbf F_U$ stabilizers,
orthogonal decomposition, and commutative properties.  Therefore
Theorem \ref{thmi:SHHGconjugators} holds for all virtually compact
special groups.  We note that \cite{CrispGodelleWiest:ConjRAAGs} establish a linear 
time solution to the conjugacy problem for fundamental groups of 
compact special cube complexes. Their result doesn't a priori establish the 
linear conjugator property of Theorem~\ref{thmi:SHHGconjugators}, 
although we believe that their approach could be used to do so. 

We believe that the linear conjugator property will
in general fail for cubulated groups without the hypothesis that the
cube complex is special.  Our proof relies heavily on the close
relationship between orthogonality and commutation, something which
can fail for CAT$(0)$ cubical groups which are not special, even
though they may be hierarchically hyperbolic groups.  The Burger-Mozes
groups \cite{BurgerMozes:TreeProducts,BurgerMozes:TreeProductsIHES},
for instance, are plausibly a counterexample; see
\cite[Section~8.2.2]{BehrstockHagenSisto:HHS1}, or Wise's construction
\cite{Wise:CSC}.

\subsection*{Acknowledgments}

We thank Mark Hagen for many interesting discussions about 
hierarchically hyperbolic spaces.  We thank
Jacob Russell and Abdul Zalloum for feedback on an early draft, and the anonymous 
referees for useful comments.

\section{Background} \label{sec:background}

\subsection{Hyperbolic geometry} We begin by gathering several facts about $\delta$--hyperbolic metric spaces and refer the reader to \cite{BridsonHaefliger} for further details.

A map of metric spaces $f\colon (X,\dist_X)\to (Y,\dist_Y)$ is a \emph{$(\lambda,c)$--quasi-isometric embedding} if for all $x,y\in X$
\[
\frac1\lambda\dist_X(x,y)-c\leq \dist_Y(f(x),f(y))\leq \lambda\dist_X(x,y)+c.
\]
A \emph{$(\lambda,c)$--quasi-geodesic}  is a \emph{$(\lambda,c)$--quasi-isometric embedding} of an interval $I\subseteq \mathbb R$ into $X$, and a \emph{geodesic} is an isometric embedding of $I$ into $X$. In both cases, we allow $f$ to be a \emph{coarse map}, that is, a map which sends points in $I$ to uniformly bounded diameter sets in $X$.  A (coarse) map $f\colon [0,T]\to X$ is an \emph{unparametrized $(\lambda,c)$--quasi-geodesic} if there exists a strictly increasing function $g\colon [0,T']\to[0,T]$ such that the following hold:
	\begin{itemize} 
	\item $g(0)=f(0)$, 
	\item $g(T')=f(T)$,  
	\item $f\circ g\colon [0,T']\to X$ is a $(\lambda,c)$--quasi-geodesic, and 
	\item for each $j\in[0,T']\cap \mathbb N$, we have the diameter of $f(g(j))\cup f(g(j+1))$ is at most $c$.
	\end{itemize}
	
	If $Y\subseteq X$ is a subspace, then for any constant $K\geq 0$,  we denote the closed $K$--neighborhood of $Y$ in $X$ by
\[\mathcal N_K(Y)=\{x\in X\mid \dist_X(x,Y)\leq K\}.\]   We may write $\nbhd^X_K(Y)$ to emphasize that the neighborhood is being taken in $X$.
	
A subspace $Y\subseteq X$ is \emph{$\sigma$--quasi-convex} if any geodesic in $X$ with endpoints in $Y$ is contained in $\nbhd_\sigma(Y)$.  The subspace $Y$ is called \emph{quasi-convex} if it is $\sigma$--quasi-convex for some $\sigma$.

If $X$ is a geodesic metric space and $x,y\in X$, we let $[x,y]$ denote a geodesic from $x$ to $y$.  If we want to emphasize the metric space $X$, we write $[x,y]_X$.

\begin{defn}[$\delta$--hyperbolic space]
Fix $\delta\geq 0$.  A metric space $X$ is \emph{$\delta$--hyperbolic} if given any $x,y,z\in X$ and any geodesics $\alpha,\beta,\gamma$ between them, we have $\alpha\cup\beta\subseteq\nbhd_\delta(\gamma)$. If the particular choice of $\delta$ is not important, we simply say that $X$ is \emph{hyperbolic}.
\end{defn}

Quasi-geodesics in a hyperbolic spaces satisfy two important properties: a local-to-global property and the Morse Lemma.  A path $p$ is an \emph{$L$--local $(\lambda,c)$--quasi-geodesic} if every subpath $p$ of length at most $L$ is a $(\lambda,c)$--quasi-geodesic.
\begin{lem}[Local-to-global Property]\label{lem:localtoglobal}
Let $X$ be a $\delta$--hyperbolic metric space and fix $\ell_0\geq 0$.
There exists $L=L(\ell_0,\delta)$ depending only on $\delta$ and
$\ell_0$ such that:  if $\ell\in[0,\ell_0]$ and
$\gamma\colon I\to X$ is an $L$--local $(1,\ell)$-quasi-geodesic, then
$\gamma$ is a global $(2,\ell)$--quasi-geodesic.
\end{lem}

\begin{lem}[Morse Lemma]\label{lem:Morse}
Let $X$ be a $\delta$--hyperbolic metric space, and fix $\lambda\geq 1$ and $c\geq0$.  There exists a constant $\sigma$ depending only on $\delta,\lambda$, and $c$ such that if $\gamma_1$ and $\gamma_2$ are $(\lambda,c)$--quasi-geodesics in $X$ with the same endpoints, then $\gamma_1\subseteq \nbhd_\sigma(\gamma_2)$.
\end{lem}
We say $\sigma$ is the \emph{Morse constant} associated to $( 
\lambda,c)$--quasi-geodesics in a $\delta$--hyperbolic~space.

\medskip

Let $G$ act by isometries on a $\delta$--hyperbolic metric space $X$.  Then  $h\in G$ is: 
	\begin{itemize}
	\item \emph{elliptic} if it has bounded orbits; 
	\item \emph{loxodromic} if the map $\mathbb Z\to X$ defined by $n\mapsto h^nx$ is a quasi-isometric embedding for some (equivalently, any) $x\in X$;
	\item \emph{parabolic} otherwise.
	\end{itemize}
Isometries of a hyperbolic space can also be characterized by their limit sets in the Gromov boundary $\partial X$ of $X$.  An element $h\in G$ is elliptic, parabolic, or loxodromic if the limit set of $h$ has cardinality $0,1$, or $2$, respectively.  If the limit set of $h$ has cardinality 2, we call these limit points $h^{\pm\infty}$.

Loxodromic isometries will play a particularly important role in this
paper, and we discuss them in more depth.  For the rest of the
subsection, fix a group $G$ acting by isometries on a
$\delta$--hyperbolic space $X$, and fix an element $h\in G$ that is
loxodromic with respect to this action.  The \emph{translation length}
of $h$ is $[h]_X:=\inf_{x\in X}\dist_X(x,hx)$, or simply $[h]$ if the
space $X$ is clear.  The \emph{stable translation length} of $h$ in
$X$ is
\[
\tau_X(h):=\lim_{n\to\infty}\frac{\dist_X(x_0,h^nx_0)}{n}
\]
for some (equivalently, any) $x_0\in X$.  These two quantities are related by $\tau_X(h)\leq [h]_X\leq \tau_X(h)+16\delta$.

The element $h$ acts on $X$ as translation
along a quasi-geodesic axis which connects the  two limit points
$h^{\pm\infty}$ of $h$ in $\partial X$.  Up to passing to powers, such
an axis can be chosen to be a \emph{uniform quality} quasi-geodesic,
that is, with quasi-geodesic constants which depending only on $\delta$ and not
on the choice of loxodromic isometry.  As this will be important in
this paper, we now describe the construction of such an axis.  The
following lemma summarizes results in \cite[Section~3]{Coulon}.

\begin{lem}[Construction of an $\ell$--nerve]\label{lem:ellnerve}
Let $G$ act on a $\delta$--hyperbolic space $X$.  Suppose $h\in G$ is
loxodromic and $\tau_X(h)\geq L_S\delta-16\delta$, where $L_S$ 
depends only on $\delta$ (and is more explicitly described in \cite[Definition~2.8]{Coulon}).  Then for any $\ell\in [0,\delta]$, there exists a
$(2,\ell)$--quasi-geodesic $\gamma_h^X$ in $X$ which connects the
limit points $h^{\pm\infty}$ of $h$ in $\partial X$, called the
\emph{$\ell$--nerve} of $h$.  The $\ell$--nerve of $h$ is
$(\ell+8\delta)$--quasiconvex and preserved by $h$.
\end{lem}

We briefly recall the construction of the $\ell$--nerve and refer the
reader to \cite[Definition~3.3 and subsequent remark]{Coulon} for further details. 
Fix $\ell\in[0,\delta]$.  By
the definition of the translation length $[h]$, there exists $x\in X$
such that $\dist_X(x,hx)\leq [h]+\ell/2$.  Thus we can find a
$(1,\ell/2)$--quasi-geodesic $\gamma$ from $x$ to $hx$.  If $\gamma$
has length $T$, then $[h]\leq T<[h]+\ell/2$.  Extend $\gamma$ to a
bi-infinite path $\gamma_h^X$ using the action of $\langle h\rangle$;
that is $\gamma_h^X$ is the concatenation of the segments $h^i\gamma$
for $i\in\mathbb Z$.
In particular, for any $h$ for which $\tau_X(h)\geq
L_S\delta-16\delta$, we have that $\gamma$ is an $L_S\delta$--local
$(1,\ell)$--quasi-geodesic.  By Lemma \ref{lem:localtoglobal},
$\gamma_h^X$ is therefore a global $(2,\ell)$--quasi-geodesic.

\begin{defn}[Quasi-geodesic axis]\label{def:qgeoaxis}
Let $G$ act on a $\delta$--hyperbolic space $X$, and fix the constant
$L_S$ from Lemma~\ref{lem:ellnerve} and a constant $\ell\in
[0,\delta]$.  Suppose $h\in G$ is a loxodromic isometry of $X$, and
let $k\in \mathbb N$ be such that $\tau_X(h^k)\geq
L_S\delta-16\delta$.  A \emph{$(2,\ell)$--quasi-geodesic axis} of
$h^k$ in $X$ is an $\ell$--nerve $\gamma_{h^k}^X$.  If the
quasi-geodesic constants are not important, we simply call
$\gamma_{h^k}^X$ an \emph{axis} of $h^k$.
\end{defn}

Suppose $h,g\in G$ and $h$ is loxodromic with respect to the action of $G$ on a hyperbolic metric space $X$ with $\tau_X(h)\geq L_S\delta-16\delta$ and a $(2,\ell)$--quasi-geodesic axis $\gamma_h^X$.   Then $ghg^{-1}$ is also loxodromic with respect to the action of $G$ on $X$ with translation length $\tau_X(ghg^{-1})=\tau_X(h)$, and it follows from the construction of the $\ell$--nerve that $g\gamma_h^X$ is a $(2,\ell)$--quasi-geodesic axis of $ghg^{-1}$.

\subsection{Hierarchically hyperbolic spaces}

We recall the definition of a hierarchically hyperbolic space as 
given in \cite{BehrstockHagenSisto:HHS2}.    The definition is in the setting of a \emph{quasi-geodesic metric space}, that is, a metric space in which any two points can be connected by a uniform quality quasi-geodesic.

\begin{defn}[Hierarchically hyperbolic space]\label{defn:HHS}
The quasi-geodesic space $(\cuco X,\dist_{\cuco X})$ is a
\emph{hierarchically hyperbolic space} (HHS) if there exists
$\delta\geq0$, an index set $\mathfrak S$, and a set $\{\fontact
W:W\in\mathfrak S\}$ of $\delta$--hyperbolic spaces $(\fontact
W,\dist_W)$, satisfying the following conditions:

\begin{enumerate}
\item\textbf{(Projections.)}\label{item:dfs_curve_complexes} There is
a set $\{\pi_W\co \cuco X\rightarrow2^{\fontact W}\mid W\in\mathfrak
S\}$ of \emph{projections} sending points in $\cuco X$ to sets of
diameter bounded by some $\xi\geq0$ in the various $\fontact
W\in\mathfrak S$.  Moreover, there exists $K$ so that each $\pi_W$ is
$(K,K)$--coarsely Lipschitz and $\pi_W(\cuco X)$ is $K$--quasiconvex
in $\fontact W$.

 \item \textbf{(Nesting.)} \label{item:dfs_nesting} $\mathfrak S$ is
 equipped with a partial order $\nest$, and either $\mathfrak
 S=\emptyset$ or $\mathfrak S$ contains a unique $\nest$--maximal
 element; when $V\nest W$, we say $V$ is \emph{nested} in $W$.  (We
 emphasize that $W\nest W$ for all $W\in\mathfrak S$.)  For each
 $W\in\mathfrak S$, we denote by $\mathfrak S_W$ the set of
 $V\in\mathfrak S$ such that $V\nest W$.  Moreover, for all $V,W\in\mathfrak S$
 with $V\propnest W$ there is a specified subset
 $\rho^V_W\subset\fontact W$ with $\diam_{\fontact W}(\rho^V_W)\leq\xi$.
 There is also a \emph{projection} $\rho^W_V\colon \fontact
 W\rightarrow 2^{\fontact V}$.  We call the elements of the index set $\s$ \emph{domains}.

 \item \textbf{(Orthogonality.)} 
 \label{item:dfs_orthogonal} $\mathfrak S$ has a symmetric and
 anti-reflexive relation called \emph{orthogonality}: we write $V\orth
 W$ when $V,W$ are orthogonal.  Also, whenever $V\nest W$ and $W\orth
 U$, we require that $V\orth U$.  We require that for each
 $T\in\mathfrak S$ and each $U\in\mathfrak S_T$ for which
 $\{V\in\mathfrak S_T\mid V\orth U\}\neq\emptyset$, there exists $W\in
 \mathfrak S_T-\{T\}$, so that whenever $V\orth U$ and $V\nest T$, we
 have $V\nest W$.  The domain $W$ is called the \emph{container}
 associated to $U$ in $T$. 
 Finally, if $V\orth W$, then $V,W$ are not
 $\nest$--comparable.
 
 \item \textbf{(Transversality and consistency.)}
 \label{item:dfs_transversal} If $V,W\in\mathfrak S$ are not
 orthogonal and neither is nested in the other, then we say $V,W$ are
 \emph{transverse}, denoted $V\transverse W$.  There exists
 $\kappa_0\geq 0$ such that if $V\transverse W$, then there are
  sets $\rho^V_W\subseteq\fontact W$ and
 $\rho^W_V\subseteq\fontact V$ each of diameter at most $\xi$ and 
 satisfying: $$\min\left\{\dist_{
 W}(\pi_W(x),\rho^V_W),\dist_{
 V}(\pi_V(x),\rho^W_V)\right\}\leq\kappa_0$$ for all $x\in\cuco X$.
 
 For $V,W\in\mathfrak S$ satisfying $V\nest W$ and for all
 $x\in\cuco X$, we have: $$\min\left\{\dist_{
 W}(\pi_W(x),\rho^V_W),\diam_{\fontact
 V}(\pi_V(x)\cup\rho^W_V(\pi_W(x)))\right\}\leq\kappa_0.$$ 
 
 The preceding two inequalities are the \emph{consistency inequalities} for points in $\cuco X$.
 
 Finally, if $U\nest V$, then $\dist_W(\rho^U_W,\rho^V_W)\leq\kappa_0$ whenever $W\in\mathfrak S$ satisfies either $V\propnest W$ or $V\transverse W$ and $W\not\orth U$.
 
 \item \textbf{(Finite complexity.)} \label{item:dfs_complexity} There exists $n\geq0$, the \emph{complexity} of $\cuco X$ (with respect to $\mathfrak S$), so that any set of pairwise--$\nest$--comparable elements has cardinality at most $n$.
  
 \item \textbf{(Large links.)} \label{item:dfs_large_link_lemma} There
exist $\lambda\geq1$ and $E\geq\max\{\xi,\kappa_0\}$ such that the following holds.
Let $W\in\mathfrak S$ and let $x,x'\in\cuco X$.  Let
$N=\lambda\dist_{_W}(\pi_W(x),\pi_W(x'))+\lambda$.  Then there exists $\{T_i\}_{i=1,\dots,\lfloor
N\rfloor}\subseteq\mathfrak S_W-\{W\}$ such that for all $T\in\mathfrak
S_W-\{W\}$, either $T\in\mathfrak S_{T_i}$ for some $i$, or $\dist_{
T}(\pi_T(x),\pi_T(x'))<E$.  Also, $\dist_{
W}(\pi_W(x),\rho^{T_i}_W)\leq N$ for each $i$.

 \item \textbf{(Bounded geodesic image.)}
 \label{item:dfs:bounded_geodesic_image} There exists $E>0$ such that 
 for all $W\in\mathfrak S$,
 all $V\in\mathfrak S_W-\{W\}$, and all geodesics $\gamma$ of
 $\fontact W$, either $\diam_{\fontact V}(\rho^W_V(\gamma))\leq E$ or
 $\gamma\cap\neb_E(\rho^V_W)\neq\emptyset$.
 
 \item \textbf{(Partial Realization.)} \label{item:dfs_partial_realization} There exists a constant $\alpha$ with the following property. Let $\{V_j\}$ be a family of pairwise orthogonal elements of $\mathfrak S$, and let $p_j\in \pi_{V_j}(\cuco X)\subseteq \fontact V_j$. Then there exists $x\in \cuco X$ so that:
 \begin{itemize}
 \item $\dist_{V_j}(\pi_{V_j}(x),p_j)\leq \alpha$ for all $j$,
 \item for each $j$ and 
 each $V\in\mathfrak S$ with $V_j\nest V$, we have 
 $\dist_{V}(\pi_V(x),\rho^{V_j}_V)\leq\alpha$, and
 \item if $W\transverse V_j$ for some $j$, then $\dist_W(\pi_W(x),\rho^{V_j}_W)\leq\alpha$.
 \end{itemize}

\item\textbf{(Uniqueness.)} For each $\kappa\geq 0$, there exists
$\theta_u=\theta_u(\kappa)$ such that if $x,y\in\cuco X$ and
$\dist_{\cuco X}(x,y)\geq\theta_u$, then there exists $V\in\mathfrak S$ such
that $\dist_V(\pi_V(x),\pi_V(y))\geq \kappa$.\label{item:dfs_uniqueness}
\end{enumerate}

\end{defn}

For ease of readability, given $U\in\mathfrak S$, we typically 
suppress the projection map $\pi_U$ when writing distances in
$\fontact U$, i.e., given $x,y\in\cuco X$ and $p\in\fontact U$ we
write $\dist_U(x,y)$ for $\dist_U(\pi_U(x),\pi_U(y))$ and
$\dist_U(x,p)$ for $\dist_U(\pi_U(x),p)$.  When necessary for clarity, we may also write $\fontact(U)$ instead of $\fontact U$.

An important consequence of being a hierarchically hyperbolic space is the following distance formula, which relates distances in $\X$ to distances in the hyperbolic spaces $\fontact U$ for $U\in\s$.  Give $a,b\in \mathbb R$, the notation $\ignore{a}{b}$ denotes the quantity which is $a$ if $a\geq b$ and is $0$ otherwise.  Given $C,D$, we say $a\asymp_{C,D} b$ if $C^{-1} a-D\leq b\leq Ca+D$.  We use $a\asymp_D b$ if $|a-b|\leq D$, and we use $a\preceq_{C,D} b$ if $a\leq Cb+D$.

\begin{thm}[Distance formula for HHS;  
    \cite{BehrstockHagenSisto:HHS2}]\label{thm:distance_formula}
Let $(\cuco X, \mathfrak S)$ be a hierarchically hyperbolic space.  Then
there exists $s_0$ such that for all $s\geq s_0$, there exist $C,D$ so
that for all $x,y\in\cuco X$,
$$\dist_{\X}(x,y)\asymp_{C,D}\sum_{U\in\mathfrak
S}\ignore{\dist_U(x,y)}{s}.$$
\end{thm}

The distance formula  says that the distance between two points in $\X$ can be approximated by measuring the distances between their projections to the hyperbolic spaces, and, moreover, that we only need to consider hyperbolic spaces for which that projection is sufficiently large.    

\begin{defn}[Relevant domains] For any constant $R\geq s_0$ and any two points $x,y\in \mathcal X$, we say $U\in\s$ is \emph{relevant} (with respect to $x,y,R$) if $\dist_U(x,y)\geq R$; if we want to emphasize the constant $R$, we say that $U$ is $R$--\emph{relevant} (with respect to $x,y$).  We denote  the set of $R$--relevant domains by $\relevant(x,y;R)$.
\end{defn}

In other words, the set of $R$--relevant domains for a pair of points $x,y\in\X$ are the domains which appear in the distance formula for $x$ and $y$ with the threshold  $s=R$. 

\begin{notation} \label{not:E}
Given a hierarchically hyperbolic space $(\X,\s)$ we let $E$ denote a 
    constant larger than 
any of the constants occurring in Definition \ref{defn:HHS} and larger than the constant $s_0$ from Theorem \ref{thm:distance_formula}.
\end{notation}

\begin{defn}[Hierarchy path] 
    Given a hierarchically hyperbolic space $(\X,\s)$ 
    and a constant $\lambda\geq 1$, a 
    \emph{$(\lambda,\lambda)$--hierarchy
    path} $\gamma\subset \X$ is a $(\lambda,\lambda)$--quasi-geodesic in $\X$ with the 
    property that for each $U\in\s$ the path $\pi_{U}(\gamma)$ is an 
    unparametrized $(\lambda,\lambda)$--quasi-geodesic in $\fontact U$. 
    \end{defn}
    By 
	\cite[Theorem~4.4]{BehrstockHagenSisto:HHS2}, for any 
	sufficiently large $\lambda$, any two points $x,y\in\X$ are connected 
	by a 
	$(\lambda,\lambda)$--hierarchy path.  We fix such a constant $\lambda>E$, and let $\mu(x,y)\subseteq \X$ denote a $(\lambda,\lambda)$--hierarchy path from $x$ to $y$.

\begin{defn} [Hierarchically hyperbolic group]\label{defn:HHG}
    A finitely generated group $G$ is  a  \emph{hierarchically 
    hyperbolic group} if some (hence any) Cayley graph of $G$ is a hierarchically hyperbolic space, and the hierarchically hyperbolic structure is $G$--invariant.  In particular, a hierarchically hyperbolic group is a finitely generated group $G$, equipped with a specific choice of finite generating set, such that there is a hierarchically hyperbolic space $(G,\s)$ satisfying the following properties:
    \begin{itemize}
    \item $G$ acts cofinitely on $\s$, preserving the relations $\nest,\trans$ and $\orth$;
    \item For each $U\in\s$ and $g\in G$, there is an isometry $g\colon \fontact U\to\fontact(gU)$, and if $h\in G$, then the isometry $gh\colon \fontact U\to\fontact(ghU)$ is equal to the composition $\fontact U\xrightarrow{h}\fontact (hU)\xrightarrow{g}\fontact(ghU)$;
    \item For each $U\in \s$ and $g,x\in G$, we have $g\pi_U(x)=\pi_{gU}(gx)$; and
    \item For each $U,V\in\s$ such that $U\trans V$ or $U\propnest V$ and each $g\in G$, we have $\rho^{gU}_{gV}=g\rho^U_V$.
    \end{itemize}
 \end{defn}   
 
 Given a hierarchically hyperbolic group $(G,\s)$, we use $\dist_G$ to denote the distance in the group $G$ with respect to some (fixed) finite generating set.

\subsection{Gate Maps and Standard Product Regions}

In analogy with quasiconvex subspaces of hyperbolic spaces, there is a notion of a hierarchically quasiconvex subspace of a hierarchically hyperbolic space $\X$.

\begin{defn}[Hierarchically quasiconvex]
Let $(\X,\s)$ be a hierarchically hyperbolic space.  A subspace $\mathcal Y$ of $\X$ is \emph{$k$--hierarchically quasiconvex} for some $k\colon[0,\infty)\to[0,\infty)$ if the following hold:
\begin{enumerate}
\item For all $U\in \s$, the projection $\pi_U(\mathcal Y)$ is a $k(0)$--quasiconvex subspace of $\fontact U$;

\item For every $\kappa>0$ and every point $x\in \X$ satisfying $\dist_U(\pi_U(x),\pi_U(\mathcal Y))\leq \kappa$ for all $U\in\s$, we have $\dist_{\X}(x,\mathcal Y)\leq k(\kappa)$.

\end{enumerate}
\end{defn}

The first condition says that the subspace $\mathcal Y$ projects to a (uniformly) quasiconvex subspace in every hyperbolic space, while the second condition ensures that all points in $\X$ which project near $\mathcal Y$ in every hyperbolic spaces are near $\mathcal Y$ in $\X$.

As is the case for quasiconvex subspaces of hyperbolic spaces, if $\mathcal Y$ is a hierarchically quasiconvex subspace of a hierarchically hyperbolic space $\X$, then there is a well-defined ``nearest point projection'' from $\X$ to $\mathcal Y$, called a gate map.

\begin{defn}[Gate maps] \label{def:gates}
If $(\X,\s)$ is a hierarchically hyperbolic group and $\mathcal Y$ is
a hierarchically quasiconvex subspace of $\X$, then the \emph{gate
map} is a coarsely-Lipschitz map $\mathfrak g_\mathcal Y\colon \X\to
2^\mathcal Y$, so that for each $x\in \X$, the image $\mathfrak
g_\mathcal Y(x)$ is a subset of the points in $\mathcal Y$ with the
property that for each $U\in\s$ the set $\pi_{U}(\mathfrak g_\mathcal
Y(x))$ uniformly coarsely coincides  
with the
closest point projection in $\fontact U$ of $\pi_{U}(x)$ to
$\pi_{U}(\mathcal Y)$.
\end{defn}
The following lemma shows that gate maps are uniformly coarsely equivariant.

\begin{lem}[{\cite[Lemma~4.16]{RussellSprianoTran:convexity}}] \label{lem:gatesequivar}
Let $(G,\s)$ be a hierarchically hyperbolic group, and let $\mathcal Y$ be a $k$--hierarchically quasiconvex subspace of $G$.  Then there exists a constant $A$ depending on $(G,\s)$ and $k$ such that for every $g,x\in G$, we have
\[
\dist_G(g\frak g_{\mathcal Y}(x),\frak g_{g\mathcal Y}(gx))\leq A.
\]
\end{lem}

We now recall an important family of hierarchically quasiconvex subspaces in a
hierarchically hyperbolic space called \emph{standard product regions}
introduced in \cite[Section~13]{BehrstockHagenSisto:HHS1} and studied
further in \cite{BehrstockHagenSisto:HHS2}.  The definition we give can be found in \cite[Definition~2.20]{Russell:relative} and is also discussed in \cite[Section~1.2.1]{BehrstockHagenSisto:asdim} .

\begin{defn}[Standard product region] \label{def:productregion}
Let $(\X,\s)$ be a hierarchically hyperbolic space, and let $U\in \s$. The \emph{standard product region for $U$} is the set
\[\mathbf P_U=\{x\in\X\mid \dist_V(x,\rho^U_V)\leq E \textrm{ for all $V\in \s$ with $V\trans U$ or $V\sqsupsetneq U$}\}.\]
Note that if $S\in\s$ is $\nest$--maximal, then $\mathbf P_S=\X$.
\end{defn}

In other words, given $U\in\s$ and $V\in \s$ satisfying $V\trans U$ or $V\sqsupsetneq U$, the product region $\mathbf P_U$ is precisely the set of points which project near $\rho^U_V$ in $\fontact V$.
It thus follows from this definition that for such $U,V$, we have  $\rho^U_V\asymp_E \pi_V(\mathbf P_U)$; that is, the projection $\rho^U_V$ is coarsely equal to the projection of the product region $\mathbf P_U\subseteq \X$ into $\fontact V$.

Though it is not obvious from this definition, the product region $\mathbf P_U$ is quasi-isometric to a space with decomposes as a direct product of two factors, $\mathbf F_U$ and $\mathbf E_U$.  As these factors will be important in this paper, we describe them in detail.  See \cite[Section~5.2]{BehrstockHagenSisto:HHS2} for additional details.  We first define $\FU U$ and $\EU U$ as abstract spaces.  In the paragraphs following the definitions, we explain that these spaces admit embeddings into $\X$.  Unless otherwise noted, we will always think of these embeddings, rather than the abstract spaces themselves.

\begin{defn}[Nested partial tuple ($\FU U$)]\label{defn:nested_partial_tuple}
Let $\mathfrak S_U=\{V\in\mathfrak S \mid V\nest U\}$.  Fix
$\kappa\geq E$ and let $\FU U$ be the set of
$\kappa$--consistent tuples in $\prod_{V\in\mathfrak S_U}2^{\fontact
V}$ (i.e., tuples satisfying the consistency inequalities  
of Definition~\ref{defn:HHS}.(\ref{item:dfs_transversal})).
\end{defn}

\begin{defn}[Orthogonal partial tuple ($\mathbf E_U$)]\label{defn:orthogonal_partial_tuple}
Let $\mathfrak S_U^\orth=\{V\in\mathfrak S\mid V\orth U\}\cup\{W\}$, where
$W$ is a $\nest$--minimal element such that $V\nest W$ for all
$V\orth U$.  Fix $\kappa\geq E$, and let $\mathbf E_U$ be the set of
$\kappa$--consistent tuples in $\prod_{V\in\mathfrak
S_U^\orth-\{A\}}2^{\fontact V}$.
\end{defn}

\begin{rem}
 The particular choice of constant $\kappa$ will not be important in this paper.  For simplicity, given a hierarchically hyperbolic group, we fix $\kappa=E$, and for each domain $U$ we consider only spaces $\mathbf F_U$ and $\mathbf E_U$ defined using $E$--consistent tuples. 
\end{rem}

Given $\cuco X$ and $U\in\mathfrak S$, there is a well-defined map $\phi_U\co\mathbf F_{U}\times \mathbf E_U\to\cuco X$.   The precise definition of this map is not necessary for this paper; we refer the interested reader to \cite[Construction~5.10]{BehrstockHagenSisto:HHS2}.  The product region $\mathbf P_U$ defined in Definition \ref{def:productregion} is coarsely equal to the image $\phi_U\left(\mathbf F_{U}\times \mathbf E_U\right)$ in $\cuco X$.  In this paper, we will  only work with  $\mathbf P_U$ and $\mathbf F_U$.  For all results that we state for $\mathbf F_U$,  analogous statements also hold for $\mathbf E_U$.

Fixing any $e\in \mathbf E_U$ restricts $\phi_U$ to a map $\phi^\nest\co \mathbf F_U\times \{e\}\to \X$.  In general this map $\phi^\nest$ depends on the choice of $e\in \mathbf E_U$.  When the basepoint is immaterial (or understood), we abuse notation and consider $\mathbf F_U$ to be a subspace of $\cuco X$, that is, $\mathbf F_U=\operatorname{im}\phi^\nest$.

It is proven in \cite[Lemma~5.5]{BehrstockHagenSisto:HHS2} that standard product regions $\mathbf P_U$ and their factors $\mathbf F_U\times \{e\}$ for each $e\in\mathbf E_U$  (considered as subspaces of $\cuco X$) are uniformly hierarchically quasiconvex.  Therefore there are well-defined gate maps $\frak g_{\mathbf P_U}\colon\X\to\mathbf P_U$ and $\frak g_{\mathbf F_U\times\{e\}}\colon\X\to\mathbf F_U\times \{e\}$  for each $U\in \s$ and  each $e\in \mathbf E_U$. 

\begin{rem}\label{rem:gates}
We note that the gate map $\frak g_{\mathbf F_U\times\{e\}}$ depends on the choice of $e\in \mathbf E_U$.  However, the image of the gate map in $\mc CV$ for any $V \nest U$ is independent of this choice (see \cite[Remark~1.16]{BehrstockHagenSisto:asdim}).  That is, if $e,e'\in\mathbf E_U$, then for any $x\in \X$, we have $\pi_V(\frak g_{\mathbf F_U\times\{e\}}(x))=\pi_V(\frak g_{\mathbf F_U\times\{e'\}}(x))$.  In statements where we only consider the image of the gate map in the hyperbolic spaces, we simplify notation and write $\mathfrak g_{\mathbf F_U}$.
\end{rem}

The following lemma  provides a formula for computing the 
distance between a point and a product region.  It  
is an immediate consequence of \cite[Corollary~1.28]{BehrstockHagenSisto:quasiflats}; we give a sketch of the proof here for completeness.
\begin{lem} 
    [\cite{BehrstockHagenSisto:quasiflats}]\label{lem:disttoprod}
Let $(\X,\s)$ be a hierarchically hyperbolic space. Fix $U\in\s$ and 
let 
 $\mathcal Y=\{Y\in\s \mid Y\trans U \textrm{ or } Y\sqsupsetneq U\}$.
Then for all $s\geq s_0$ and any $x\in\X$,
\begin{align} \label{eqn:disttoprod}
\dist_{\X}(x,\mathbf P_U)\asymp_{C,D} \sum_{Y\in \mathcal 
Y}\ignore{\dist_Y(x,\rho^U_Y)}{s},
\end{align} where $s_0,C,$ and $D$ are the constants from Theorem \ref{thm:distance_formula}.
\end{lem}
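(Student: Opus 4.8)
The plan is to deduce the estimate from \cite[Corollary~1.28]{BehrstockHagenSisto:quasiflats}, which is precisely a coarse formula for the distance from a point of a hierarchically hyperbolic space to a standard product region; the only real content is to check that the index set appearing there agrees with the set $\mathcal Y$ defined above. I would record the underlying mechanism as follows. By \cite[Lemma~5.9]{BehrstockHagenSisto:HHS2} the product region $\mathbf P_U$ is hierarchically quasiconvex, so it carries a coarsely Lipschitz gate map $\gate_{\mathbf P_U}\colon\X\to 2^{\mathbf P_U}$ with the defining property that for every $W\in\s$ the set $\pi_W(\gate_{\mathbf P_U}(x))$ coarsely coincides with the closest-point projection of $\pi_W(x)$ onto $\pi_W(\mathbf P_U)$ in the hyperbolic space $\fontact W$. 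Comparing $\dist_W(x,\gate_{\mathbf P_U}(x))$ coordinatewise with $\dist_W(x,p)$ for an arbitrary $p\in\mathbf P_U$ and invoking the distance formula of \cite{BehrstockHagenSisto:HHS2} shows that $\gate_{\mathbf P_U}(x)$ is a coarsely closest point of $\mathbf P_U$ to $x$, whence
\[
\dist_\X(x,\mathbf P_U)\asymp\dist_\X(x,\gate_{\mathbf P_U}(x))\asymp\sum_{W\in\s}\ignore{\dist_W(x,\gate_{\mathbf P_U}(x))}{s}
\]
for $s$ taken sufficiently large.

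It then remains to evaluate each summand according to how $W$ sits relative to $U$, using the standard description of the projections of a standard product region. If $W\nest U$ or $W\orth U$ (in particular if $W=U$), then $\pi_W(\mathbf P_U)$ coarsely coincides with all of $\pi_W(\X)$, so the closest-point projection of $\pi_W(x)$ onto it is coarsely $\pi_W(x)$ itself and the corresponding summand is uniformly bounded, hence killed once $s$ is large. If $W\transverse U$ or $U\propnest W$, then $\pi_W(\mathbf P_U)$ has uniformly bounded diameter and coarsely coincides with $\rho^U_W$ --- a standard property of standard product regions, ultimately a consequence of the consistency axioms of Definition~\ref{defn:HHS}.(\ref{item:dfs_transversal}) --- so the summand is $\asymp\dist_W(x,\rho^U_W)$. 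Since $\mathcal Y$ consists exactly of the domains $Y$ with $Y\transverse U$ or $U\propnest Y$, together with $U$ itself (whose contribution is nil, the symbol $\dist_U(x,\rho^U_U)$ being read as $0$), substituting into the display yields $\dist_\X(x,\mathbf P_U)\asymp\sum_{Y\in\mathcal Y}\ignore{\dist_Y(x,\rho^U_Y)}{s}$, as required.

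The place that requires care is the uniformity of constants: the gate quasiconstants, the threshold $s$, and the implied constants in the several occurrences of $\asymp$ must all be chosen uniformly over $W\in\s$, and the finite complexity axiom of Definition~\ref{defn:HHS} is what bounds the number of nontrivial summands. All of this is packaged into \cite[Corollary~1.28]{BehrstockHagenSisto:quasiflats}, so in the final write-up I would simply invoke that corollary after noting the identification of index sets above.
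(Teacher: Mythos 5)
Your proposal is correct and coincides with the paper's approach: the paper's entire proof is the citation to \cite[Corollary~1.28]{BehrstockHagenSisto:quasiflats}, which you also land on. The additional mechanism you sketch --- the gate map, the distance formula, and the case analysis on how $W$ sits relative to $U$ (including the observation that the $Y=U$ summand is vacuous because $\rho^U_U$ is undefined and $\pi_U(\mathbf P_U)$ is coarsely all of $\pi_U(\X)$) --- is an accurate unpacking of that corollary, which the paper leaves implicit.
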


\begin{proof}[Sketch of proof]
To each bounded set $\mathcal A\subset\X$, we associate a tuple $(\mathcal A_V)_{V\in\s}$ whose components are the projections of $\mathcal A$ to $\fontact V$ for each $V\in \s$, i.e., $\mathcal A_V=\pi_V(\mathcal A)$. We will consider the case $\mathcal A=\mathfrak g_{\mathbf P_U}(x)\subset \mathbf P_U$.  By \cite[Remark~1.16]{BehrstockHagenSisto:asdim}, if $V\nest U$ or $V\perp U$, we have $\pi_V(\frak g_{\mathbf{P}_U}(x))=\pi_V(x)$.  Combining this with the definition of $\mathbf{P}_U$ (Definition \ref{def:productregion}), we have 
\[
(\mathfrak g_{\mathbf P_U}(x))_V=\begin{cases} \rho^U_V & \textrm{if $V\in\mathcal Y$} \\ \pi_V(x) & \textrm{otherwise}.\end{cases}
\]

There is a constant $K_0$ depending only on $(G,\s)$ such that $\dist_{\X}(x,\mathbf P_U)\asymp_{K_o}\dist_{\X}(x,\mathfrak g_{\mathbf P_U}(x))$ by \cite[Lemma~1.27]{BehrstockHagenSisto:quasiflats}.  From the above discussion we see that the only components of the tuple $(x_V)_{V\in\s}$ associated to $x$ and the tuple $((\mathfrak g_{\mathbf P_U}(x)_V)_{V\in\s}$ associated to $\mathfrak g_{\mathbf P_U}(x)$ which differ in $\fontact V$ occur when $V\in\mathcal Y$.  Thus the distance from $x$ to $\mathfrak g_{\mathbf P_U}(x)$ in $\X$ can be approximated using only the domains $V\in\mathcal Y$.
\end{proof}

Lemma~\ref{lem:disttoprod} gives  the following geometric picture.  Let $x,y\in \X$ and $U\in \s$, and consider $x'=\frak g_{\mathbf P_U}(x)$ and $y'=\frak g_{\mathbf P_U}(y)$.  Let $V$ be a domain that is relevant for $x$ and $y$.  Then any distance in $\fontact V$  contributes either to the distance from $x$ or $y$ \emph{to} the product region $\mathbf P_U$ or to the distance \emph{within} $\mathbf P_U$, but not both (see Figure \ref{fig:Lemmadisttoprod}).  In particular, if $V\trans U$ or $V\sqsupsetneq U$, then $V$ is relevant for either $x,x'$ or $y,y'$ but not for $x',y'$.  Any other $V$ is relevant for $x',y'$ but not for $x,x'$ or $y,y'$.  
\begin{figure}
\centering
 \def\svgwidth{2.5in}
\begingroup%
  \makeatletter%
  \providecommand\color[2][]{%
    \errmessage{(Inkscape) Color is used for the text in Inkscape, but the package 'color.sty' is not loaded}%
    \renewcommand\color[2][]{}%
  }%
  \providecommand\transparent[1]{%
    \errmessage{(Inkscape) Transparency is used (non-zero) for the text in Inkscape, but the package 'transparent.sty' is not loaded}%
    \renewcommand\transparent[1]{}%
  }%
  \providecommand\rotatebox[2]{#2}%
  \newcommand*\fsize{\dimexpr\f@size pt\relax}%
  \newcommand*\lineheight[1]{\fontsize{\fsize}{#1\fsize}\selectfont}%
  \ifx\svgwidth\undefined%
    \setlength{\unitlength}{269.13714499bp}%
    \ifx\svgscale\undefined%
      \relax%
    \else%
      \setlength{\unitlength}{\unitlength * \real{\svgscale}}%
    \fi%
  \else%
    \setlength{\unitlength}{\svgwidth}%
  \fi%
  \global\let\svgwidth\undefined%
  \global\let\svgscale\undefined%
  \makeatother%
  \begin{picture}(1,0.48328868)%
    \lineheight{1}%
    \setlength\tabcolsep{0pt}%
    \put(0,0){\includegraphics[width=\unitlength,page=1]{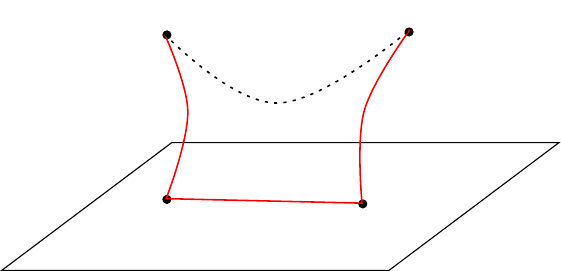}}%
    \put(0.70164248,0.44897767){\makebox(0,0)[lt]{\lineheight{1.25}\smash{\begin{tabular}[t]{l}$y$\end{tabular}}}}%
    \put(0.26634179,0.07567179){\makebox(0,0)[lt]{\lineheight{1.25}\smash{\begin{tabular}[t]{l}$x'$\end{tabular}}}}%
    \put(0.6263153,0.07567179){\makebox(0,0)[lt]{\lineheight{1.25}\smash{\begin{tabular}[t]{l}$y'$\end{tabular}}}}%
    \put(0.84763238,0.06767241){\makebox(0,0)[lt]{\lineheight{1.25}\smash{\begin{tabular}[t]{l}$\mathbf P_U$\end{tabular}}}}%
    \put(0.2776735,0.44897767){\makebox(0,0)[lt]{\lineheight{1.25}\smash{\begin{tabular}[t]{l}$x$\end{tabular}}}}%
  \end{picture}%
\endgroup%

\caption{Geometric picture of Lemma \ref{lem:disttoprod}.  Domains which are relevant for $x,y$ are relevant for either the horizontal red segment or (at least one of) the vertical red segments. }
\label{fig:Lemmadisttoprod}
\end{figure}

\subsection{Axial elements in hierarchically hyperbolic groups}

	Let $(G,\s)$ be a hierarchically hyperbolic group, and fix the
	constant $L_S$ from Lemma~\ref{lem:ellnerve} and
	$\ell\in[0,\delta]$.  (Note that the constant $\delta$ is part of
	the definition of $(G,\s)$; see Definition \ref{defn:HHS}.)
	Following
    \cite{DurhamHagenSisto:HHS_boundary}, for an element $h\in G$ we define
    \begin{equation}\label{def:bigset}\B(h)=\{U\in \s\mid \pi_U(\langle h\rangle) \textrm{ is
    unbounded}\}.\end{equation}

\begin{lem}\label{lem:finiteorder}
Let $(G,\s)$ be a hierarchically hyperbolic group.  An element $h\in
G$ is finite order if and only if $\B(h)=\emptyset$.
\end{lem}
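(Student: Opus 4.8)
**The plan is to prove the two directions separately, using the action of $G$ on the hierarchically hyperbolic space $\cuco X$ it acts on geometrically.**

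For the ``if'' direction, suppose $\B(g)=\emptyset$; I want to conclude $g$ has finite order. Since $G$ acts geometrically on $\cuco X$, it suffices to show the orbit $\langle g\rangle \cdot x_0$ is bounded in $\cuco X$ for some (equivalently any) basepoint $x_0$, because a geometric action is proper and cobounded, so bounded orbits of an infinite-order element are impossible (an infinite cyclic subgroup acting properly has unbounded orbits). The Distance Formula for hierarchically hyperbolic spaces (a standard consequence of the axioms, cf.\ \cite{BehrstockHagenSisto:HHS2}) says that for a suitable threshold, $\dist_{\cuco X}(x_0, g^n x_0)$ is coarsely comparable to $\sum_{U\in\s}\ignore{\dist_U(x_0, g^n x_0)}{}$. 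For each fixed $U\in\s$, since $U\notin\B(g)$, the set $\pi_U(\langle g\rangle)$ has bounded diameter; as $\s/G$ is finite and the action permutes the $\fontact U$ compatibly, these diameters are uniformly bounded over all $U$. The number of domains $U$ contributing a nonzero term is controlled: only finitely many $U$ (up to the action) exist, and for a fixed $n$ only boundedly many can have $\dist_U(x_0,g^nx_0)$ above threshold. I need to argue the total is bounded independently of $n$ — this follows because the set of domains on which $\langle g\rangle$ has large projection is exactly contained in $\B(g)=\emptyset$, so for $n$ large the projections never exceed the uniform bound, and hence $\dist_{\cuco X}(x_0, g^n x_0)$ is uniformly bounded. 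Thus $g$ has finite order.

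For the ``only if'' direction, suppose $g$ has finite order; then $\langle g\rangle$ is finite, so its orbit in $\cuco X$ is a finite (hence bounded) set. Consequently $\pi_U(\langle g\rangle)$ is bounded for every $U\in\s$ (each $\pi_U$ is coarsely Lipschitz), so no $U$ lies in $\B(g)$, i.e.\ $\B(g)=\emptyset$. This direction is essentially immediate.

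**The main obstacle is the ``if'' direction**, specifically controlling the Distance Formula sum uniformly in $n$: a priori, even if every individual projection $\pi_U(\langle g\rangle)$ is bounded, one must rule out the sum over the (finitely many $G$-orbits, but possibly infinitely many actual) domains $U$ from growing with $n$. The key point to make rigorous is that only finitely many domains $U$ can ever contribute a term above the Distance Formula threshold to \emph{any} $\dist_U(x_0, g^n x_0)$ — and since $\B(g)=\emptyset$ means none of them has unbounded $\langle g\rangle$-projection, the contributing domains and their contributions are bounded uniformly, giving a uniform bound on $\dist_{\cuco X}(x_0, g^n x_0)$. An alternative, perhaps cleaner route avoiding the Distance Formula: if $g$ has infinite order, pick a domain $U$ realizing large $\dist_U(x_0, g^n x_0)$ for infinitely many $n$ via the Uniqueness axiom (Definition~\ref{defn:HHS}.(\ref{item:dfs_uniqueness})) applied to $x_0$ and $g^n x_0$, whose $\cuco X$-distance tends to infinity by properness of the action; a pigeonhole argument on the finitely many $G$-orbits of domains then produces a single orbit, and hence (after translating) a single $U$ with $\pi_U(\langle g\rangle)$ unbounded, contradicting $\B(g)=\emptyset$. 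I would use this contrapositive formulation to sidestep the bookkeeping in the Distance Formula.
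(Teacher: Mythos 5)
Your ``only if'' direction is correct and essentially trivial, as you note. The real content is the ``if'' direction, and both of your proposed routes have a genuine gap at the same point.

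In the distance-formula route, you claim that the diameters of $\pi_U(\langle g\rangle)$ are uniformly bounded over $U\in\s$ because $\s/G$ is finite. That does not follow: the $G$-action permuting $\s$ carries $\pi_U$ to $\pi_{hU}$ in the sense $\pi_{hU}(x)\approx h\,\pi_U(h^{-1}x)$, so a bound on $\operatorname{diam}\pi_U(\langle g\rangle)$ gives a bound on $\operatorname{diam}\pi_{hU}(h\langle g\rangle)$, i.e.\ on the projection of a coset, not of $\langle g\rangle$ itself. The element $h$ is unrelated to $g$, so finiteness of $\s/G$ gives no uniform control of $\operatorname{diam}\pi_U(\langle g\rangle)$ across an orbit. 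Consequently you cannot conclude that the distance-formula sum is bounded independently of $n$; it could, a priori, be spread over infinitely many domains with individually bounded but unboundedly large projections. Your alternative route via the Uniqueness axiom runs into the same obstacle: pigeonholing the domains $U_n$ (with $\dist_{U_n}(x_0,g^nx_0)$ large) into a single $G$-orbit gives $U_n=h_nU$ for some fixed $U$ and uncontrolled $h_n\in G$; rewriting $\dist_{h_nU}(x_0,g^nx_0)=\dist_U(h_n^{-1}x_0,h_n^{-1}g^nx_0)$ shows the large distances you produce in $\fontact U$ are between points of $h_n^{-1}\langle g\rangle\cdot x_0$, not of $\langle g\rangle\cdot x_0$, so this does not show $\pi_U(\langle g\rangle)$ is unbounded. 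You would need the translating elements to be powers of $g$, and nothing forces that.

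The actual proof in the paper does not attempt to establish this from the HHS axioms directly. It invokes \cite[Proposition~6.4]{DurhamHagenSisto:HHS_boundary}, which proves that an automorphism of an HHG is elliptic (bounded orbits) if and only if its big set is empty; this rests on the classification of HHS automorphisms into elliptic, axial, and distorted types and is genuinely nontrivial. The lemma then follows because a group element acting on its Cayley graph (equivalently, on $\cuco X$ via a geometric action) is elliptic exactly when it has finite order. If you want a self-contained argument, you would essentially have to reprove that classification; the sketch as written does not close the gap.
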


\begin{proof} In \cite[Proposition~6.4]{DurhamHagenSisto:HHS_boundary}
it is proven that an element  $h\in G$ is elliptic if and only
if $\B(h)=\emptyset$.  The result follows from this, since a group element acts elliptically on its Cayley graph if and only
if the element is of finite order.
\end{proof}

    \begin{defn}[Axial element]\label{def:axial}
    An element $h\in G$ with $\B(h)\neq\emptyset$ is called \emph{axial}.
    \end{defn}
   
     Lemma~\ref{lem:finiteorder} shows that every infinite order element of a hierarchically hyperbolic group is axial.
By \cite[Lemma~6.7]{DurhamHagenSisto:HHS_boundary},  the elements of $\B(h)$ are pairwise
orthogonal.  As the number of pairwise orthogonal domains in a hierarchically hyperbolic group is uniformly bounded by the constants in the definition of a hierarchically hyperbolic space \cite[Lemma~2.1]{BehrstockHagenSisto:HHS2}, it  follows  that $|\B(h)|$ is uniformly bounded independently of the choice of $h$.  
As noted in \cite{DurhamHagenSisto:HHS_boundary}, since $h\colon\fontact U\to\fontact(hU)$ is an isometry, we have $h U\in \B(h)$ whenever $U\in \B(h)$.  Moreover, by \cite[Lemma~6.3]{DurhamHagenSisto:HHS_boundary}, there is a constant $M$ depending only on the constants in the definition of a hierarchically hyperbolic space such that for all $h\in G$ and $U\in\B(h)$, we have $h^M  U=U$.  In other words, by passing to a uniform power, we may assume that $h$ fixes its big set elementwise.   
Moreover, by passing to this uniform power, we may assume that $h$ is a loxodromic isometry of $\fontact U$ for any $U\in\B(h)$ by \cite[Theorem~3.1]{DurhamHagenSisto:HHS_boundary_symphoria}.  We let $\tau_U(h)$
denote the stable translation length of $h$ in this action and let $\gamma_h^U$ be a $(2,\ell)$--quasi-geodesic axis of $h$ in $\fontact U$ (see Definition \ref{def:qgeoaxis}).

\begin{rem}[{Acylindrical actions}]\label{rem:acylindricity} The action of a group $G$ on a metric space $X$ is \emph{acylindrical}
if for all $\varepsilon\geq 0$, there exist constants $R(\varepsilon),N(\varepsilon)\geq 0$ such
that for all $x,y\in X$ satisfying $\dist_X(x,y)\geq R(\varepsilon)$, there are at most $N(\varepsilon)$ elements $g\in G$ for which 
$\dist_X(x,gx)\leq \varepsilon$ and $\dist_X(y,gy)\leq
\varepsilon$. By \cite[Theorem~K]{BehrstockHagenSisto:HHS1}, $G$ acts acylindrically on $\fontact S$, where $S$ is the $\nest$--maximal element of $\s$. An immediate consequence of this is a lower bound on the translation length $\tau_S(h)$ that depends only on the hierarchy constants \cite[Lemma~2.2]{Bowditch:tight}.

Let $U\in\s$, and let $H$ be a subgroup of $G$ which fixes $U$, so that $H$ acts on $\fontact U$.  
 If $U\neq S$, it is not necessarily the case that $H$ acts acylindrically on $\fontact U$, and it remains an open question whether there is a uniform lower bound on $\tau_U(h)$ in general.  We deal with this issue in the present paper by assuming such a uniform lower bound as a hypothesis.  \textit{Hierarchical acylindricity} is a standard assumption requiring that the action of $H$ on $\fontact U$ is acylindrical for all such $U$: this would also ensure a uniform lower bound on translation length.
\end{rem}

The next lemma is a straightforward consequence of the hyperbolicity of the spaces $\fontact U$.

\begin{lem} \label{axiallem}
Let $(\cuco X,\s)$ be a hierarchically hyperbolic space, and let $G$ be
a group acting geometrically on $\cuco X$.  Fix a basepoint $x_0\in
\cuco X$, the constant $L_S$ from Lemma~\ref{lem:ellnerve}, and
$\ell\in[0,\delta]$.  Then there exist constants $K_0,L\geq 0$ such
that the following holds.  Let $h\in G$ be an axial element so that $hU=U$ for each $U\in \B(U)$  and $\tau_U(h)\geq L_S\delta$.  
For any
$k\geq K_0$   let $x',y'\in\fontact
U$ be the closest points on $\gamma^U_h$ to $\pi_U(x_0)$ and $\pi_U(h^kx_0)$,
respectively.

There exists a point $\xi$ on the subpath of $\gamma_h^U$ from $x'$ to $y'$ so that $\dist_U(\xi,x')\leq L$ and $\dist_U(\xi,\pi_U(\mu(x,y)))\leq L$. 
\end{lem}

\begin{proof}

Recall that the image of any $(\lambda,\lambda)$--hierarchy path in $\fontact U$ is a (unparametrized) $(\lambda,\lambda)$--quasi-geodesic.  Since the axis of $h$ in $\fontact U$ is a $(2,\ell)$--quasi-geodesic, the concatenation (in the appropriate order) of $\pi_U(\mu(x_0,h^kx_0))$, $[\pi_U(x_0),x']_{\fontact U}$, $[\pi_U(h^kx_0),y']_{\fontact U}$, and a subpath of $\gamma_h^U$ forms a $(2,\ell)$--quasi-geodesic quadrilateral $Q$ in $\fontact U$.  Let $M$ be the Morse constant associated to $(2,\ell)$--quasi-geodesics in a $\delta$--hyperbolic space.   Fix $K_0$ so that $L_S\delta K_0>4\delta+4M+1$, and let $k\geq K_0$.  Note that $K_0$ is independent of the choice of axial element $h$.

The quadrilateral $Q$ is  $(2M+2\delta)$--thin, that is, given any point $z$ on a side of $Q$, there is a point on one of the other three sides of $Q$ at distance at most $2\delta+2M$ from $z$. 
Let $v$ and $w$ be points on the subpath of $\gamma_h^U$ between $x'$ and $y'$ so that $\dist_S(x',v)=\lceil 4\delta+4M+1\rceil$ and $\dist_S(y',w)=\lceil 4\delta+4M+1\rceil$.  We claim that the subpath $\beta$ of $\gamma_h^U$ from $v$ to $w$ is contained in the $(2\delta+2M)$--neighborhood of $\pi_U(\mu(x,y))$.  Let $z$ be a point on the subpath of $\gamma_h^U$ from $x'$ to $y'$.  Then there is a point $z'$ on one of the other three sides satisfying $\dist_U(z,z')\leq 2M+2\delta$. Suppose  $z'$ lies on the geodesic $[\pi_U(x),x']$.  As $x'$ is the nearest point on $\gamma_h^U$ to $x$ (hence also to $z'$), we must have $\dist_U(z',x')\leq \dist_U(z',z)\leq 2M+2\delta$.  The same holds if $z'$ lies on the geodesic $[\pi_U(y),y']$.  Thus if $z$ lies on $\beta$, then $z'$ must lie on $\pi_U(\mu(x,y))$, as desired.

Since the map $\pi_U$ is $G$--equivariant and, in particular,  $h^k\pi_U(x_0)=\pi_U(h^kx_0)$, we also have $y'=h^kx'$.  Thus $\dist_U(x',y')\geq k\tau_U(h)\geq K_0 L_S\delta \geq 4\delta+4M+1$.  It follows that $\beta$ is non-empty.  We let $\xi$ be the point on $\beta$ closest to $x'$, so that $\dist_U(\xi,x')= \lceil 4\delta + 2M + 1\rceil$.  Taking $L=\lceil 4\delta + 2M + 1\rceil$ completes the proof.
\end{proof}

\section{Proof of Theorem~\ref{mainthm}}
Let $(G,\s)$ be a hierarchically hyperbolic group.  The authors and
Durham show in \cite[Corollary~3.8]{AbbottBehrstock:HHSlargest} that 
by possibly changing the hierarchy structure on $G$, we may assume
that $(G,\s)$ has \emph{unbounded products}.  In this paper, we don't 
directly use the definition of unbounded products, rather we only need the
following consequence about Morse elements in the structure $(G,\s)$, 
which follows from \cite[Theorem~4.4 \&
Corollary~5.5]{AbbottBehrstock:HHSlargest}: if $h\in G$ is an infinite
order Morse element, then $h$ is axial and $\B(h)=\{S\}$, where $S$ is the $\nest$--maximal element of $\s$.

 We begin by fixing the constants that will be used throughout the
 proof.  Definition \ref{defn:HHS} provides a constant $\delta$ such
 that $\fontact U$ is $\delta$--hyperbolic for all $U\in\s$.  Let
 $L_S$ be the constant from Lemma~\ref{lem:ellnerve}, and fix
 $\ell\in[0,\delta]$.  Let $E$ be as in Notation \ref{not:E}; in 
 particular, $E$ is larger than any of the hierarchy constants for
 $G$.  Let $T$ be the lower bound on translation length in the acylindrical action on $\fontact S$ noted in Remark~\ref{rem:acylindricity}. 
Fix $\lambda\geq \max\{2,\ell\}$ so that any two points $x,y\in G$ are 
connected by a  $(\lambda,\lambda)$--hierarchy path. 
Let $K_0,L$ be the constants from Lemma \ref{axiallem}, and fix a constant $R> 2E$.

Finally, set 
\begin{equation}\label{eqn:Kdef}
K= \max\{2\delta, R, K_0, \left\lceil\frac{4L+3E}{T}\right\rceil+2, 2L\}.
\end{equation}
This constant $K$ is uniform, in the sense that it depends only on the hierarchy constants for $(G,\s)$.

Let $a,b\in G$ be two infinite order Morse elements and suppose there
exists $g\in G$ such that $ga=bg$.  Since $(G,\s)$ has unbounded
products, we have $\B(a)=\B(b)=\{S\}$.  For simplicity of notation, we denote the asymptotic
translation length of $b$ in $\fontact S$ by $\tau(b)$.  Note that $S$
is fixed by the action of $G$ on $\s$.  Since $g$ conjugates $a^i$ to
$b^i$ for any $i\in \mathbb Z$, we first replace $a$ and $b$ by
sufficiently high powers so that $\tau(b)\geq L_S\delta$.  
By
Remark~\ref{rem:acylindricity}, such a
power can be chosen uniformly (that is, depending only on the
hierarchy constants, and not the choice of elements $a$ and $b$).

Let $\gamma_b=\gamma^S_b$ be a $(2,\ell)$--quasi-geodesic axis of
  $b$ in $\fontact S$.  Then $\gamma_a=\gamma^S_a =g^{-1}\gamma_b$ is a $(2,\ell)$--quasi-geodesic axis of $a$ in $\fontact S$.  
  We now fix a quadrilateral of $(\lambda,\lambda)$--hierarchy paths in $G$:
  $\mu(1,g)$, $\mu(1,b^K)$, $b^K\mu(1,g)=\mu(b^K,b^Kg)$, and
  $g\mu(1,b^K)=\mu(g,gb^K) =\mu(g,a^Kg)$.

Our first step is to replace $g$ with a different conjugator whose
length we are able to bound in $G$.  Since $K\geq K_0$, we may apply
Lemma \ref{axiallem} to each of the axes $\gamma_a$, $\gamma_b$ in
$\fontact S$ and the points $1,a^K\in G$ and $1,b^K\in G$,
respectively.  This yields a point $z'\in \gamma_a$ and a point $w'\in
\gamma_b$ such that $z'\in \mathcal N_{L}(\pi_S(\mu(1,a^K)))$ and
$w'\in \mathcal N_{L}(\pi_S(\mu(1,b^K)))$.  Moreover, if $x$ is a
point on $\gamma_a$ nearest to $\pi_S(1)$ and $y$ is a point
on $\gamma_b$ nearest to $\pi_S(1)$, then $\dist_S(z',x)\leq L$ and
$\dist_S(w',y)\leq L$.  See Figure \ref{fig:Case1}.  Let $z\in
\pi_S(\mu(1,a^K))$ and $w\in\pi_S(\mu(1,b^K))$ be points nearest 
to $z'$ and $w'$, respectively.  Since $g$ fixes $S$, we have $gz\in
\pi_S(\mu(g,ga^K))=\pi_S(\mu(g,b^Kg))\subseteq \fontact S$ and $gz'\in
g\gamma_a=\gamma_b$.  Since $g$ is an isometry, we have
$\dist_S(gz',gx)=\dist_S(z',x)\leq L$.

\begin{figure}[h]
  \centering 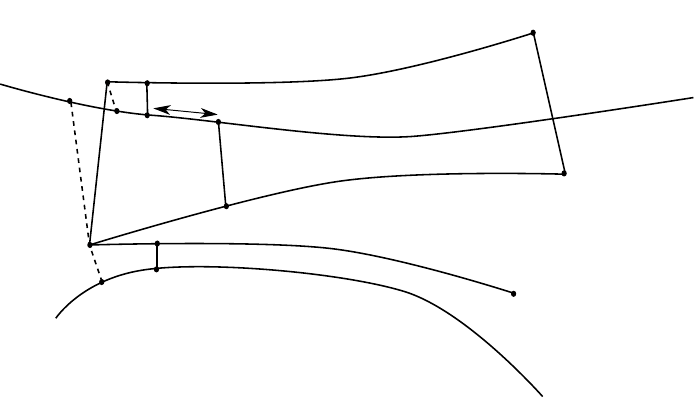	
  \caption{The geometry of the axes of $a$ and $b$ in $\fontact S$.} 
	\label{fig:Case1}
\end{figure}

By possibly premultiplying $g$ by a power of $b$, we may assume that
$\dist_S(gz',w')\leq \tau(b)$ (while still conjugating $a$
  to $b$).  Thus we have
\begin{equation}\label{eqn:taub}
\dist_S(y,gx)\leq \dist_S(y,w')+\dist_S(w',gz')+\dist_S(gz',gx)\leq \tau(b)+2L.
\end{equation}

Our goal is to bound the length of this new conjugator, which by an abuse of notation we will still call $g$.

We will show that for each $U\in\s$, we have 
\begin{equation}\label{eqn:Ubound}
\dist_U(1,g)\leq
2K\dist_U(1,b)+\dist_U(g,b^Kg)+K,
\end{equation} where $K$ is as in \eqref{eqn:Kdef}.  After establishing this bound for
each $U\in \s$, we then apply the distance formula (Theorem \ref{thm:distance_formula}) with threshold $R$ to obtain 
\[
\dist_G(1,g)\preceq_{C,D} 2K\dist_G(1,b)+\dist_G(g,b^Kg)+K.
\]
Finally, we use the fact that $\dist_G(g,b^{K}g)=\dist_G(g,g
a^{K})=\dist_G(1,a^{K})\leq K\dist_G(1,a)$, which establishes that
\begin{equation} \label{eq:case2}
\dist_G(1,g)\preceq_{C,D} 2K\dist_G(1,b)+K\dist_G(1,a)+K,
\end{equation}
where $C,D$ are the constants given by the distance formula (Theorem
\ref{thm:distance_formula}).  (Note that by assumption, $R$ is 
sufficiently large to serve as a threshold in the distance formula.)  
This will provide the desired bound in $G$.

\medskip

Fix $U\in\s$. If $U\not\in \relevant(1,g;R)$, then we have $\dist_U(1,g)\leq R\leq K$, and \eqref{eqn:Ubound} holds.  Thus we assume for the rest of the proof that $U\in \relevant(1,g;R)$.   There are two cases to consider: either $U=S$ or $U\propnest S$.  We will deal with each of these possibilities individually.

\subsubsection*{{\bf Case 1: $U=S$}}
In this case we have (as seen in Figure
\ref{fig:Case1}): 
\begin{align*}
\dist_S(1,g)&\leq \dist_S(1,w)+\dist_S(w,w')+\dist_S(w',gz')+\dist_S(gz',gz)+\dist_S(gz,g) \\
& \leq \dist_S(1,b^K)+2L+\tau_S(b)+\dist_S(g,b^Kg) \\
& \leq 2K\dist_S(1,b)+\dist_S(g,b^Kg)+K,
\end{align*}
where the final inequality follows from the fact that
$\dist_S(1,b)\geq \tau_S(b)$ and \eqref{eqn:Kdef}.  Therefore \eqref{eqn:Ubound} holds in this case.

\subsubsection*{{\bf Case 2: $U\propnest S$}}
 As we are assuming that $U$ is $R$--relevant for $1,g$, we must have
 $\rho^U_S\subseteq \fontact S$ is contained in the $E$--neighborhood
 of a geodesic in $\fontact S$ from $\pi_S(1)$ to $\pi_S(g)$ by the bounded geodesic image axiom (Definition \ref{defn:HHS}(\ref{item:dfs:bounded_geodesic_image})).  As
 geodesic quadrilaterals in $\delta$--hyperbolic spaces are $2\delta$--thin, 
 it follows that $\rho^U_S$ is contained in the
 $(E+2\delta)$--neighborhood of
 $[\pi_S(1),y]\cup[y,gx]\cup[gx,\pi_S(g)]$, where these are geodesics in $\fontact S$.  Since $x$ and
 $gx$ are the nearest point projections of $\pi_S(1)$ and $\pi_S(g)$ onto
 $\gamma_b$, respectively, it follows from \eqref{eqn:taub} that the
 projection of $[\pi_S(1),y]\cup[y,gx]\cup[gx,\pi_S(g)]$ onto
 $\gamma_b$ has diameter at most $\tau(b)+2L$.  In particular,
 since nearest point projection maps in hyperbolic spaces are Lipschitz, the nearest point on $\gamma_b$ to $\rho^U_S$ is distance at most
 $\tau(b)+2L+E$ from $y$.

By an analogous argument, if $U$ is also $R$--relevant for $b^K,b^Kg$, we must have that $\rho^U_S$ is contained in the $(E+2\delta)$--neighborhood of $[\pi_S(b^K),b^ky]\cup[b^ky,b^kgx]\cup[b^kgx,\pi_S(b^kg)]$.  In particular, the nearest point on $\gamma_b$ to $\rho^U_S$ is at distance at most $\tau(b)+2L+E$ from $b^Ky$.

However, our choice of $K$ in \eqref{eqn:Kdef} ensures that 
\[
\dist_S(y,b^Ky)\geq K\tau(b)\geq \left(\left\lceil\frac{4L+3E}{T}\right\rceil+2\right) \tau(b)\geq 4L+3E+2\tau(b),
\] 
which is a contradiction.  Therefore $U$ is not $R$--relevant for $b^K,b^Kg$, and so $\dist_U(b^K,b^Kg)\leq R$.

Therefore,
\begin{align*}
\dist_U(1,g)&\leq \dist_U(1,b^K)+\dist_U(b^K,b^Kg)+\dist_U(b^Kg,g) \\
&\leq K\dist_U(1,b)+\dist_U(b^K,b^Kg)+R \\
& \leq K\dist_U(1,b)+\dist_U(b^K,b^Kg)+K,
\end{align*}
where the final inequality follows from our choice of $K$ in \eqref{eqn:Kdef}.

Therefore \eqref{eqn:Ubound} holds in this case, which completes the  proof of theorem. \qed

\section{A family of hierarchically hyperbolic groups}\label{sec:shhg}

In this section, we highlight three properties which isolate some of
the nice features of compact 
special groups and which appears in many other contexts as well.  We will show in Proposition
\ref{prop:specialHHGs} that many hierarchically hyperbolic groups
satisfy these three properties, which we call \emph{$\mathbf F_U$
stabilizers}, \emph{orthogonal decomposition}, and
\emph{commutativity}.

Fix a hierarchically hyperbolic group $(G,\s)$.  
If $\mc U\subseteq
\s$ is a collection of pairwise orthogonal domains, we denote the
\emph{container} of $\mc U$ in $S$ by $C_\mc U$ (Definition
\ref{defn:HHS}(\ref{item:dfs_orthogonal})); by definition, each domain
$V$ which is orthogonal to every $U\in \mc U$ is nested into $C_\mc
U$.  We say $G$ has \emph{clean containers} if for every 
collection of pairwise orthogonal domains $\mathcal U$, the container $C_\mc U$ is orthogonal to
every $U\in \mc U$.  If $\mc U=\{U\}$, we write $C_U$ instead of
$C_{\{U\}}$.

Recall that for any domain $U\in\s$, we identify $\mathbf P_U$ with $\mathbf F_U\times\mathbf E_U$ (see the discussion after Definition \ref{defn:orthogonal_partial_tuple}).  If a subgroup $H\leq G$ fixes a domain $U\in\s$ (in the action of $G$ on $\s$), then whenever $V\nest U$ or $V\perp U$, we have $hV\nest U$ or $hV \perp U$, respectively, for each $h\in H$.  It follows that $H$  stabilizes the product region $\mathbf P_U$ and each of its factors $\mathbf F_U$ and $\mathbf E_U$.  
\begin{defn}\label{def:G_U}
For any $U\in \s$, let $G_U$ be the subgroup of $G$ that fixes $U$ in
the action of $G$ on $\s$ and that stabilizes $\mathbf F_U\times\{e\}$
for each $e\in\mathbf E_U$.
\end{defn} 
Equivalently, $G_U$ is the subgroup which stabilizes each factor of $\mathbf F_U\times\mathbf E_U$ and acts as the identity on the second factor.  We note that when $G$ has clean containers, the second factor $\mathbf E_U$ is isometric to $\mathbf F_{C_U}$ by Lemma \ref{lem:uniquecleancontainers}.

\medskip

\begin{exmp}\label{ex:specialcommute}
	Right-angled Artin groups and, more generally, compact special
	groups, 
	provide a good example to have in mind when reading this section.
	With the standard hierarchically hyperbolic group structure given in \cite{BehrstockHagenSisto:HHS1}, such
	groups are hierarchically acylindrical, and have clean
	containers \cite[Proposition~7.2]{AbbottBehrstock:HHSlargest}.
	One nice property of right-angled Artin groups is that two
	elements commute if and only if all the generators in a cyclically
	reduced factorization of one of the elements commute with all the
	generators in a cyclically reduced factorization of the other
	element.  Hence, in the Salvetti complex of a right-angled Artin
	group, $G$, we have that two elements span a periodic plane if and only
	if they commute.  Similarly, if a group is compact special it embeds as a quasi-convex subgroup of a right-angled
	Artin group and thus inherits this property as well.  Further, if
	a group is virtually compact special, then, up to taking powers,
	two elements commute if and only if they span a periodic plane.
	For these groups $U,V\in\frakS$ are orthogonal if and only if they
	have associated subcomplexes of the cube complex which span a
	direct product.  Hence, it follows that given $U,V\in\frakS$ which
	are orthogonal, the subgroup which fixes $U$ in the action on
	$\frakS$ and which stabilizes the subset $\mathbf F_U \times\{e\}$
	for each $e\in\mathbf E_U$ has the property that it commutes with
	the similarly defined subset for $V$.  In other words, elements of
	$G_U$ and $G_V$ commute. In particular, if $g\in G$ fixes each $U_{i}\in\B(G)$ then 
	$g$ can be 
	written as a product of elements in $G_{U_{i}}$.
\end{exmp}

\subsection{The $\mathbf F_U$ stabilizers, orthogonal decomposition, and commutative properties}
We will now extract and formalize the properties which we described above for 
right-angled Artin groups.

\subsubsection{The $\mathbf F_U$ stabilizers property}
Since $(G,\s)$ is a hierarchically hyperbolic group, there is a finite fundamental  domain $\s'$ for the action of $G$ on $\s$.  We may choose $\s'$ to have the property that for each $U\in \s'$, there exists $e\in \mathbf E_U$ such that $1\in \mathbf F_U\times \{e\}$, where $1$ is the identity element of $G$.  We denote this copy of $\mathbf F_U$ by $\FixedF_U$.  For such domains $U$, we always have $G_U\subseteq \FixedF_U$.  To see this, consider any  $f\not\in \FixedF_U$. Since $1\in \FixedF_U$ and $f=f\cdot 1\not\in \FixedF_U$, the element $f$ does not stabilize $\FixedF_U$, so $f\not\in G_U$.  

The first property says that for all $U\in \s'$, the sets $\FixedF_U$ and $G_U$ are coarsely equal.

\begin{defn}[$\mathbf F_U$ stabilizers]\label{def:FUStabilizers}
A hierarchically hyperbolic group $(G,\s)$ satisfies the
\emph{$\mathbf F_U$ stabilizers property} if there exists a constant
$\nu$ depending only on the hierarchy constants such that $\dist_G(f,G_U)\leq \nu$ for each $U\in
\s'$ and any $f\in \FixedF_U$.
\end{defn}

The $\mathbf F_U$ stabilizers property implies that for domains $U\in
\s'$, the subgroup $G_U$ inherits many geometric properties 
from 
$\FixedF_U$, including hierarchical quasiconvexity.  In a
hierarchically hyperbolic group, there is a function $k\colon
[0,\infty)\to [0,\infty)$ so that for any $U\in \s$, the subspace
$\mathbf F_U$ is $k$--hierarchically quasiconvex
\cite[Construction~5.10]{BehrstockHagenSisto:HHS2}. If the group has
the $\mathbf F_U$ stabilizers property, then since $G_U$ and $\FixedF_U$ are at uniformly bounded distance whenever $U\in \s'$, there is a
function $k'\colon [0,\infty)\to [0,\infty)$ depending only on $k$ and
$E$ so that the subgroup $G_U$ is also hierarchically quasiconvex for
any $U\in\s'$.  It then follows from
\cite[Lemma~5.5]{BehrstockHagenSisto:HHS2} that there is a
well-defined gate map $\mathfrak g_{G_U}\colon G\to G_U$.  Moreover,
for any $g\in G$, each coset $gG_U$ of $G_U$ is also
$k'$--hierarchically quasiconvex in $G$, so we also have a
well-defined gate map $\mathfrak g_{gG_U}\colon G\to gG_U$.  These
gate maps will be important for defining the two additional properties
we introduce in this section.

The next lemma says that $(G_U,\s_U)$ is a hierarchically hyperbolic group, where
$\s_U=\{V\in \s\mid V\nest U\}$.  
\begin{lem}
Let $(G,\s)$ be a  hierarchically hyperbolic group satisfying the $\mathbf F_U$ stabilizers property.  For any $U\in \s'$, $(G_U,\s_U)$ is a hierarchically hyperbolic group.
\end{lem}

\begin{proof}
The $\mathbf F_U$ stabilizers property says that $G_U$ is at uniformly bounded distance from $\FixedF_U$.  In particular, $\FixedF_U$ and $G_U$ are quasi-isometric.  Since $(\FixedF_U,\s_U)$ is a hierarchically hyperbolic space \cite[Proposition~5.11]{BehrstockHagenSisto:HHS2}, this immediately implies that $(G_U,\s_U)$ is a hierarchically hyperbolic space, where the associated hyperbolic spaces and maps are the same as those for $(\FixedF_U,\s_U)$. It remains to show that $(G_U,\s_U)$ is a hierarchically hyperbolic
\emph{group}.  For this, note that $G_U$ stabilizes $\s_U$ by
definition.  Since $(G,\s)$ is a hierarchically hyperbolic group and
$G_U\leq G$, the four additional conditions from Definition
\ref{defn:HHG} hold because they hold for the action of $G$ on $\s$.  For
example, since $G$ acts cofinitely on $\s$ and preserves the relations
$\nest, \trans$, and $\perp$, so does $G_U$.  Similar arguments show the other
three conditions hold.
\end{proof}

\subsubsection{The orthogonal decomposition property}
The next property allows any infinite order element which fixes a collection of 
pairwise-orthogonal domains 
$\{U_1,\dots, U_k\}$ to be decomposed into a product of elements in
$G_{U_i}$.  Before defining this property, the following lemma 
establishes that for each $i=1,\dots, k$ there is a preferred 
$\mathbf F_{U_i}\times
\{e_i\}$ which we denote  by $\FixedF_{U_i}$. The careful 
reader will note that if $U_i$ is already
in the fundamental domain $\s'$, then the choice given by the lemma 
is consistent with our previous choice of $\FixedF_{U_i}$.

\begin{lem}\label{lem:choiceofFU}
Let $(G,\s)$ be a hierarchically hyperbolic group with the $\mathbf
F_U$ stabilizers property, and let $\mathcal U=\{U_1,\dots, U_k\}$ be
a maximal collection of pairwise-orthogonal domains in $\s$.  Then
there exist $t\in G$ and copies $\mathbf F_{U_i}\times\{e_i\}$ such
that the following hold for all $i$:
\begin{itemize}
\item $U_i=tU_i'$ for some $U_i'\in \s'$;
\item $\mathbf F_{U_i}\times \{e_i\}=t\FixedF_{U_i'}$;
\item $G_{U_i}=t G_{U_i'} t^{-1}$; and 
\item $\dist_G(t,\frak g_{\mathbf P_{\mc U}}(1))\leq E\nu$, where $\nu$ is the constant from Definition \ref{def:FUStabilizers}.
\end{itemize}
\end{lem}

\begin{proof} Consider the product region $\mathbf P_{\mc U}$ associated to $\mathcal U$, and let $t'$ be any point in $\frak g_{\mathbf P_{\mc U}}(1)$.  For the first part of the proof, it will be convenient to distinguish between the abstract product region  $\mathbf P_{\mc U}=\mathbf F_{U_1}\times \cdots \times \mathbf F_{U_k}$ and its image $\phi_{\mc U}(\mathbf P_{\mc U})\subseteq G$ (see the discussion after Definition \ref{defn:orthogonal_partial_tuple}).  Let $(t_1',\dots, t_k')\in \mathbf P_{\mc U}$ be such that $\phi_{\mc U}(t_1',\dots, t_k')=t'$.    We will adjust each $t_i'$ individually to find a new point $(t_1,\dots, t_k)$, which will determine the points $e_i$ in the statement.  At the $i$th stage, we change the $i$th coordinate of the point in $\mathbf P_{\mc U}$ to ensure that it lies in a coset of $G_{U_i}$ that is completely contained in the associated copy of $\mathbf F_{U_i}$.  In subsequent steps, we will adjust later coordinates: this may change which coset of $G_{U_i}$ the point lies in, but it will simultaneously translate the copy of $\mathbf F_{U_i}$ so that this new coset is still contained in the new copy of $\mathbf F_{U_i}$, as desired.  After changing all coordinates, the desired element $t$ will be $\phi_{\mc U}(t_1,\dots, t_k)$.    

We begin with $i=1$.  Since $\s'$ is a fundamental domain, 
there is some $f_1'\in G$ and $U_1'\in \s'$ such that $\phi_{\mc
U}(\mathbf F_{U_1},t_2',\dots, t_k')=f_1'\FixedF_{U_1'}$.  Since $G_{U_1'}\subseteq \FixedF_{U_1'}$, we
have $f_1'G_{U_1'}\subseteq \phi_{\mc U}(\mathbf
F_{U_1},t'_2,\dots, t'_k)$.  By the $\mathbf F_U$ stabilizers
property, there is an element $t_1\in \mathbf F_{U_1}$ with
$\dist_G(t',\phi_{\mc U}(t_1, t_2',\dots, t_k'))\leq \nu$ and $\phi_{\mc
U}(t_1,t_2',\dots, t_k')\in f_1'G_{U_1'}$.  

We fix $t_1$ from the previous paragraph and now consider $i=2$.  The
point $\phi_{\mc U}(t_1,t_2',\dots, t_k')$ is in $\phi_{\mc
U}(t_{1},\mathbf F_{U_2},t_3',\dots, t_k')$.  Again, as above, there is some $f_2'\in
G$ and $U_2'\in \s'$ for which $\phi_{\mc U}(t_{1},\mathbf F_{U_2},t_3',\dots,
t_k')=f_2'\FixedF_{U_2'}$.  Also, as above, we can find an
element $t_2\in \mathbf F_{U_2}$ with $\dist_G(\phi_{\mc
U}(t_1,t_2',\dots, t_k'),\phi_{\mc U}(t_1, t_2,t_3',\dots, t_k'))\leq
\nu$ and $\phi_{\mc U}(t_1,t_2,t_3',\dots, t_k')\in f_2'G_{U_2'}$.

Continuing in this way for each $i$ yields a point $(t_1,t_2,\dots, t_k)\in \mathbf F_{U_1}\times\cdots\times \mathbf F_{U_k}$.  Letting $t=\phi_{\mc U}(t_1,\dots, t_k)$, it follows from the triangle inequality that  
\begin{equation}\label{eqn:disttt'}
\dist_G(t',t)\leq k\nu\leq E\nu,
\end{equation} where the final inequality holds because any collection of pairwise orthogonal domains has cardinality bounded by $E$.   

We now return to our convention of identifying $\mathbf P_{\mc U}$ with its image $\phi_{\mc U}(\mathbf P_{\mc U})\subseteq G$.  We have shown that, for each $i$, we have $t\in \mathbf F_{U_i}\times \{e_i\}$ for some $e_i$.  Precisely, $e_i=\phi_U^\perp(t_1,\dots, \hat t_i,\dots, t_k)$, where $\hat t_i$ indicates that the term $t_i$ does not appear in the tuple.  

We now show that  $\mathbf F_{U_i}\times \{e_i\}$ satisfies the conclusion of the lemma for each $i$.  The final bullet point holds by \eqref{eqn:disttt'}.

There is an element $f_i\in G$ such that $\mathbf F_{U_i}\times \{e_i\}=f_i\FixedF_{U_i'}$, where $U_i=f_iU_{i}'$, and $t\in f_iG_{U_i}$.  
  Thus $f_iG_{U_i'}=t G_{U_i'}$, and $G_{U_i}=t G_{U_i} t^{-1}$, so the third bullet point holds.  Also, since $t=f_iq_i$ for some $q_i\in G_{U_i'}$,  we have 
  \[
  t\FixedF_{U_i'}=f_iq_i\FixedF_{U_i'}=f_i\FixedF_{U_i'},
  \]
  so the second bullet point holds.  Finally, $tU_{i}'=f_iq_iU_i'=f_iU_i'=U_i$, which shows that the first bullet point holds and concludes the proof of the lemma.
\end{proof}

The following lemma is presumably well-known, but is 
not in the literature.
An immediate corollary of this is that an 
 axial element fixes the container associated to its big set.

\begin{lem}\label{lem:uniquecleancontainers}
Let $(G,\frakS)$ be a hierarchically hyperbolic group with clean containers, and let  $\{U_1,...,U_k\}$ be a (non-maximal) collection of pairwise orthogonal domains.  There exists a unique $C\in\frakS$ such that: if for each $i$,  a domain $V\in\frakS$ satisfies $V\perp U_i$ then $V\nest C$.
\end{lem}

\begin{proof}
First, by Definition \ref{defn:HHS}(\ref{item:dfs_orthogonal}) some $C$ exists with the desired property, what is needed is to prove uniqueness. So suppose that both $C$ and $C'$ satisfy this property. Since the containers are clean, each of $C$ and $C'$ are orthogonal to $U_i$ for each $i$. Thus, since $C$ is a container and since $C'$ is orthogonal to all the $U_i$, we must have that $C'\nest C$. Similarly, $C\nest C'$. Thus $C=C'$, as desired.
\end{proof}

\begin{defn}[Orthogonal decomposition]
Let $(G,\s)$ be a hierarchically hyperbolic group with clean containers which satisfies the
$\mathbf F_U$ stabilizers property, and let $h\in G$ be an infinite order element.  Let $\{U_1,\dots, U_{k+1}\}$ be a
maximal collection of pairwise orthogonal domains of $\s$ so that $\B(h)=\{U_1,\dots, U_k\}$ and $U_{k+1}$ is the container associated to $\B(h)$ in $S$.  Suppose
$h\in G$ fixes $\B(h)$ elementwise.  By Lemma~\ref{lem:choiceofFU}, there exists $t\in G$ and, for each $i=1,\dots, k$, a domain $U_i'\in\s'$ with $U_i=tU_i'$.   The label of the
vertex $\frak g_{tG_{U_i}}(h)$ is $th_i'$ for some $h_i'\in
G_{u_i'}\subseteq \FixedF_{U_i'}$.  Define
 \begin{equation}\label{eqn:decompterm}
 h_{U_i}:=th_i't^{-1}\in tG_{U_i'}t^{-1} = G_{U_i}.
 \end{equation}

 The group $(G,\s)$ satisfies the \emph{orthogonal decomposition property} if the following two properties hold for all axial elements $h\in G$.  First, there is a uniform lower bound on the translation length $\tau_{U_i}(h)$ for each $U_i\in \B(h)$ (this uniformity only depends  on the hierarchy constants and not  the choice of $h$).  Second, after possibly relabeling the domains of $\B(h)$, we have
 \[
 h=h_{U_1}h_{U_2}\ldots h_{U_k}=th_1'\ldots h_k't^{-1}.
 \]
 We say $h_{U_1}h_{U_2}\ldots h_{U_k}$ is a \emph{decomposition} of $h$. 
\end{defn}

This decomposition may depend on the order of the
factors.  In particular, it may be the case that $h_{U_i}$ does not
commute with $h_{U_j}$, because elements of $G_{U_i}$ and $G_{U_j}$
may not commute.  However, the final property we discuss 
will require that such elements do commute, and so the order of the factors will not be important for the groups we consider.

\subsubsection{The commutative property}

The final property ensures  that  $G_U$ and $G_V$ commute whenever $U\perp V$.
\begin{defn}[Commutative property]
A hierarchically hyperbolic group $(G,\s)$ with the $\mathbf F_U$ stabilizers property  satisfies the
\emph{commutative property} if $[G_U,G_V]=1$ whenever $U\perp V$.
\end{defn}

The following lemma is a consequence of the commutative property.

\begin{lem}\label{lem:specialaxial}
Let $G$ be a hierarchically hyperbolic group satisfying the $\mathbf F_U$ stabilizers, orthogonal decomposition, and commutative properties.  Let $h\in G$ be an axial element which fixes  $\B(h)=\{H_1,\dots, H_k\}$ elementwise, and let $C$ be the clean container associated to $\B(h)$.  Then there exists a uniform constant $K$ such that $(h^K)_C=1$, where $(h^K)_C$ is the factor corresponding to $C$ in the decomposition of $h^K$ with resepect to $\{H_1,\ldots,H_k,C\}$ and $1$ is the identity element of $G_C\leq G$.
\end{lem}

\begin{proof}
First, note that by Lemma \ref{lem:uniquecleancontainers}, $h$ fixes $\{H_1,\dots,H_k,C\}$ elementwise, and so the decomposition $h=h_{H_1}\ldots h_{H_k}h_C$ of $h$ with respect to this set is well-defined. 
Recall that $h_C\in G_C$ is an element of the hierarchically hyperbolic group $(G_C,\s_C)$.  Since $C\not\in \B(h)$, $h_C$ is not an axial element of $G_C$.  Therefore $h_C$ must be finite order by Lemma \ref{lem:finiteorder}.  By \cite[Theorem~G]{HHP:semihyp} there are finitely many conjugacy classes of finite order elements in a hierarchically hyperbolic group, and therefore there is a uniform constant $K$ such that $h_C^K$ is the identity element of $G_C$. 

By the commutative property, we have 
\[
h^K=(h_{H_1}\dots h_{H_k}h_C)^K=(h_{H_1})^K\dots (h_{H_k})^K(h_C)^K=(h_{H_1})^K\dots (h_{H_k})^K.
\]
From this decomposition, it is clear that $(h^K)_C=1$.
\end{proof}

\subsubsection{Examples}

We now give several examples of hierarchically hyperbolic groups
satisfying the three properties defined above.
Moreover, additional examples can be built 
using combination theorems, of which there are several in the
literature (see, e.g., \cite{BehrstockHagenSisto:HHS2,
BerlaiRobbio:combinationHHG, BerlyneRussell:HHSgraphproducts,
RobbioSpriano:2decomposableHHG}).

\begin{prop} \label{prop:specialHHGs}
Let $\Xi$ be the set of hierarchically hyperbolic groups with clean containers which satisfy the $\mathbf F_U$ stabilizers, orthogonal decomposition, and commutative properties.  Then the following groups are in $\Xi$.

	\begin{enumerate}
	\item Hyperbolic groups 
	\item Compact special groups 
	\item Groups hyperbolic relative to a collection of  groups in $\Xi$ 
	\item Direct products of groups in $\Xi$
	\end{enumerate}
\end{prop}

\begin{proof}  We consider each class of groups in turn.
\begin{enumerate}
\item The statement is immediate for hyperbolic groups $G$, as they all admit hierarchically hyperbolic structures with a single domain $S$, and the action on $\fontact S$ is acylindrical.  For this domain, $\mathbf F_S$ is a Cayley graph of the group and $G_S=G$.  As there is no orthogonality, the orthogonal decomposition and commutative properties vacuously hold.
\item For compact special groups, we use the standard structure
described in \cite{BehrstockHagenSisto:HHS2}.  This structure
satisfies the three properties by a completely analogous argument to
the one given for right-angled Artin groups in Example
\ref{ex:specialcommute}.

\item Let $G$ be a group which is hyperbolic relative to a collection
$\mathcal P$ of hierarchically hyperbolic groups with clean containers
satisfying the $\mathbf F_U$ stabilizers, orthogonal decomposition,
and commutative properties.  Then $G$ is a hierarchically
hyperbolic group by \cite[Theorem~9.1]{BehrstockHagenSisto:HHS2} and
has clean containers by
\cite[Proposition~7.4]{AbbottBehrstock:HHSlargest}.  For each $P\in
\mathcal P$, let $(P,\s_P)$ be an HHG structure for $P$, and for each
left coset $gP$, let $\s_{gP}$ be a copy of $\s_P$, with the
associated hyperbolic spaces and projections.  Let $\widehat G$ be the
hyperbolic space formed from $G$ by coning off each left coset of each
$P\in \mathcal P$.  Then the hierarchically hyperbolic group structure
on $G$ is given by $\s=\{\widehat G\}\sqcup_{gP\in G\mathcal
P}\s_{gP}$.  The domain $\widehat G$ is the unique
$\nest$--maximal domain, and if $U\in \s_{gP}$ and $V\in \s_{g'P'}$
where $gP\neq g'P'$, then $U\trans V$.  We refer the reader to
\cite[Section~9]{BehrstockHagenSisto:HHS2} for details of this
structure, but note one important feature of the structure $(G,\s)$: any
pair of orthogonal domains are contained in some $gP\in G\mathcal P$.

We first check that the $\mathbf F_U$ stabilizes property holds.  A
fundamental domain for the action of $G$ on $\s$ is given by
$\s'=\{\widehat G\}\sqcup _{P\in \mc P}\s_P$.  Let $U\in \s'$.  If 
$U=\widehat G$, then $\mathbf F_U=G$ and $G_U=G$, so the property holds
for this domain.  Now suppose $U\in \s_P$ for some $P\in\mathcal P$.
Then $\mathbf F_U\subseteq P$.  If $g\not \in P$, then $g\mathbf
F_U\subseteq gP\neq P$, and so $g\not \in G_U$.  Therefore $G_U$ is a
subgroup of $ P$ in this case.  Since $(P,\s_P)$ satisfies the
$\mathbf F_U$ stabilizers property, it follows that $(G,\s)$ does, as
well.

We now check the orthogonal decomposition property.  Since $G$ is
hyperbolic relative to $\mc P$, every infinite order element $h\in G$
is either loxodromic with respect to the action of $G$ on $\widehat
G$, in which case $\B(h)=\{\hat G\}$ or is conjugate into some $P\in
\mc P$, in which case we consider the conjugate $ghg^{-1}\in P$. 
In the first case, the action of $G$ on $\widehat
G$ is acylindrical, and so there is a uniform lower bound on the translation length of $h$, and we have the trivial orthogonal decomposition of $h$.   
In the second case, there is a uniform lower bound on the translation length of $ghg^{-1}$ in each domain in $\B(ghg^{-1})$ by the assumption that each $P$ satisfies the orthogonal decomposition property.  Translation length is invariant under conjugacy, and so we obtain a uniform lower bound on the translation length of $h$ in each domain in $\B(h)$.  There is also an orthogonal decomposition of
$ghg^{-1}$ coming from the assumption on $(P,\s_{P})$.   Since
$\B(h)=g^{-1}\B(ghg^{-1}$, conjugating each term in the
decomposition of $ghg^{-1}$ by $g^{-1}$ yields an orthogonal
decomposition for $h$.

Finally, the commutative property follows immediately from the
construction of the orthogonal decomposition in the previous paragraph
and the fact that $(P,\s_P)$ satisfies the commutative property for
each $P\in\mc P$.

\item Assume $G=G_1\times G_2$, and suppose $(G_1,\s_1),(G_2,\s_2)$
are hierarchically hyperbolic groups with clean containers which
satisfy the $\mathbf F_U$ stabilizers, orthogonal decomposition, and
commutative properties.  Then $G$ is a hierarchically hyperbolic
group by \cite[Proposition~8.27]{BehrstockHagenSisto:HHS2} and has
clean containers by
\cite[Proposition~7.3]{AbbottBehrstock:HHSlargest}.  The hierarchy
structure on $G$ is given by $\s=\{S,
U_1,U_2\}\sqcup\s_1\sqcup\s_2\sqcup \{V_U\mid U\in \s_1\cup\s_2\}$,
where $S$ is the unique $\nest$--maximal element, $U_i$ is a domain
into which all domains in $\s_i$ nest, and for each $U\in \s_i$, the
domain $V_U$ is a domain into which all domains in $\s_{j}$ with
$j\neq i$ and all domains in $\s_i$ orthogonal to $U$ nest.  The only
important relation between domains for this proof is orthogonality.
In addition to any orthogonality among domains in $\s_1$ or $\s_2$, we
have that all domains in $\s_1$ are orthogonal to all domains in
$\s_2$, $U_1\perp U_2$, and $V_U\perp U$ for each $U\in \s_1\cup\s_2$.
By construction, $(G,\s)$ has clean containers. 
See \cite[Section~8]{BehrstockHagenSisto:HHS2} for further details on
this structure.

When we refer to subsets of $G_i$ or the $(G_i,\s_i)$ structure, we 
append a superscript $i$ to the notation.  For example, if $U\in
\s_i$, then $\mathbf F_U^i$ is the corresponding subset of $G_i$.

We first check the $\mathbf F_U$ stabilizers property.  If $U=S$,
there is nothing to check, so suppose first that $U\in \s_1$.  Let
$G_U^1$ denote the subgroup from the structure $(G_1,\s_1)$ which
stabilizers $\mathbf F_U^1\times \{e\}$ for each $e\in \mathbf E_U^1$.
In the structure $(G,\s)$, there are additional domains orthogonal to
$U$; in particular every domain in $\s_2$ is orthogonal to $U$.  We
have $\mathbf F_U=\mathbf F_U^1$, but now $\mathbf E_U=\mathbf E_U^1
\times G_2$.  Therefore, we have $(g_1,g_2)\in G_U$ if and only if
$g_1\in G_U^1$ and $g_2=1$.  Thus $G_U\simeq G_U^1\times \{1\}$.
Since $(G_1,\s_1)$ satisfies the $\mathbf F_U$ stabilizers property,
$G_U^1$ is coarsely equal to $\mathbf F_U^1$.  The above discussion
then implies that $G_U$ is coarsely equal to $\mathbf F_U$.
Similarly, if $U\in \s_2$, then $G_U\simeq \{1\}\times G_U^2$, and we
again have that $G_U$ is coarsely equal to $\mathbf F_U$.

Suppose next that $U=U_1$.  Then $\mathbf F_U=G_1$, and $\mathbf
E_U=G_2$.  Since $G$ is the direct product of $G_1$ and $G_2$, we have
that $G_{U_1}=G_1$, and so $G_{U_1}$ is coarsely equal to $\mathbf
F_U$.  The analogous argument holds if $U=U_2$.

Finally, fix $U\in\s_1$, and consider the domain $V_U$.  Let $C_U$ be
the container associated to $U$ in the $\nest$--maximal domain of
$\s_1$.  Then $\mathbf F_{V_U}=\mathbf E_U=\mathbf E_U^1\times
G_2=\mathbf F_{C_U}^1\times G_2$, and $\mathbf E_{V_U}=\mathbf E^1_{C_U}$.  It
follows that $G_{V_U}\simeq G^1_C\times G_2$.  Since $\mathbf F^1_{C_U}$
is coarsely equal to $G_{C_U}^1$, we also have that $\mathbf F_{V_U}$ is
coarsely equal to $G_{V_U}$, as desired.  An analogous argument holds
if we fix $U\in \s_2$.  Therefore, $(G,\s)$ satisfies the $\mathbf
F_U$ stabilizers property.

The orthogonal decomposition and commutative properties both 
follow immediately because they hold in each $(G_i,\s_i)$ and 
 $G_1$ and $G_2$ commute.\qedhere
\end{enumerate}
\end{proof}

The $\mathbf F_U$ stabilizers, orthogonal decomposition, and
commutative properties all involve orthogonality and properties of
product regions.  Hence, intuitively, if a combination theorem does
not add any additional orthogonality relations (or only in a trivial
way, such as by adding domains whose associated hyperbolic space is
bounded diameter), then such a combination of groups in
$\Xi$ should, in general, yield a group in $\Xi$.  For example, we expect
that trees of groups in $\Xi$ satisfying the hypotheses of the combination theorem in \cite[Theorem~8.6]{BehrstockHagenSisto:HHS2} are also in $\Xi$.  In particular,
combined with Proposition \ref{prop:specialHHGs} (3) \& (4), this
would show that for hierarchically hyperbolic groups $\pi_1(M)$ 
where $M$ is the fundamental group of compact 3--manifolds with no Nil
or Sol in its prime decomposition, then $\pi_1(M)$ is in $ \Xi$.

\subsection{A non-example: the mapping class group} We  briefly explain why the standard hierarchy structure on the
mapping class group fails to satisfy the $\mathbf F_U$ stabilizers
property.  Notwithstanding this fact, we believe that a modification of
the properties of this section can be used to make the present approach
work for the mapping class group, as well. We do not carry this out,
though, because the approaches we see for doing so are all 
technical, and the present results are already
known for mapping class groups.  We record this fact  for those using
these properties in the future with an eye towards other applications.

The standard hierarchically hyperbolic group structure $\s$ on the
mapping class group of a surface $S$ is described in
\cite[Theorem~11.1]{BehrstockHagenSisto:HHS2}.  The domains $U\in\s$
correspond to homotopy classes of essential, not necessarily
connected, \emph{open} subsurfaces $U\subseteq S$.  Two domains are
orthogonal if the corresponding subsurfaces are disjoint.  In
particular, the annuli about 
the boundary curves of a subsurface do not intersect the
subsurface; thus an annulus around a boundary curve is a domain orthogonal
to the subsurface.  A finite fundamental domain $\s'$ for the action
of $\MCG(S)$ on $\s$ is provided by taking a collection of
subsurfaces, one for each homeomorphism type of subsurface.  For each
$U\in \s'$, $\mathbf F_U$ is coarsely equal to the mapping class group
of the subsurface associated to $U$ and $\mathbf E_U$ is coarsely
equal to the mapping class group of the complementary closed subsurface
$S-U$.

One subtlety in the hierarchically hyperbolic structure on mapping
class groups is that while elements of $\MCG(S)$ supported on disjoint
subsurfaces commute, elements supported on disjoint \emph{closed} 
subsurfaces are distinct, while two elements supported on 
disjoint \emph{open} surfaces may coincide. A simple example of this 
is found by taking a product of elements in a once punctured torus 
which generate the Dehn twists along the boundary.   Taking the 
genus two surface obtained by doubling along the boundary curve, we 
see that we can generate that same Dehn twist by a product of 
elements on either of the open once-punctured tori separated by that 
curve.

Associated to a closed subsurface $V$, which includes its boundary
components, is an element of $\s$ consisting of the disjoint union of
the interior of $V$, which we will denote $\mathring V$, with annuli
around the elements $\alpha_1,\dots, \alpha_k$ of $\partial V$.  The Dehn
twist about a boundary curve in $\partial V$ can be represented as 
a product of 
mapping class elements supported on the interior of $V$, even those
these are orthogonal domains.  Accordingly the stabilizer of 
$\mathring V$, in the action of $G$
on $\s$, is (possibly up to  finite index  if $V$ is homeomorphic 
to $S-V$) a central extension of 
$\MCG(\mathring V)\times \MCG(S-V)$ by 
$\mathbb Z^k$, where $\mathbb Z^k$ is generated by Dehn twists along
the boundary curves $\alpha_i$, see, e.g.,
\cite{BirmanLubotzkyMcCarthy}.  The domains $\mathring V,S-V,$ and the
annuli around each $\alpha_i$ form a maximal collection of pairwise
orthogonal domains. If this was a semidirect product instead of a 
central extension, this would yield the $\mathbf F_U$ stabilizers
and orthogonal decomposition 
properties.  However, the fact that $\MCG(\mathring V)$ doesn't act 
cocompactly on $\mathbf F_U$ means that the $\mathbf F_U$ stabilizers 
property doesn't hold in this structure.

We note, though, that any open subsurface $\mathring U$ is contained in 
a larger subsurface $U$ obtained by taking the union of $\mathring U$ 
and all the annuli which bound $\mathring U$.  For this subsurface $U$, 
the subgroup $G_U$ of the mapping class group of $S$ which
stabilizes $\mathbf F_U$ and fixes $\mathbf E_U$ pointwise \emph{can} be
identified with $\MCG(U)$. This is a weaker version of 
the $\mathbf F_U$ stabilizers property. We expect that this weaker 
version might be useful in future work.\footnote{We note that a 
related property to this is studied in forthcoming work of Montse 
Casals-Ruiz, Mark Hagen, and Ilya Kazachkov.}

\subsection{Conjugators in HHGs}
We are now ready to prove Theorem \ref{thmi:SHHGconjugators}, which we restate for the convenience of the reader.

\Restate{Theorem}{thmi:SHHGconjugators}{}{Let $(G,\s)$ be a
hierarchically hyperbolic group satisfying the $\mathbf F_U$
stabilizers, orthogonal decomposition, and commutative properties.
There exist constants $K,C$ and $N$ such that if $a,b\in G$ are
infinite order elements which are conjugate in $G$, then there exists $g\in G$ with $ga^N=b^Ng$ and
$$
|g|\leq K(|a|+|b|)+C.
$$ }

\begin{proof} Fix a hierarchically hyperbolic group $(G,\s)$ and a
finite fundamental domain $\s'$ for the action of $G$ on $\s$ as at
the beginning of this section. Assume that  $(G,\s)$ satisfies the 
$\mathbf F_U$
stabilizers, orthogonal decomposition, and
commutative properties.
 For each $U\in \s$, we fix $\FixedF_U=\mathbf F_U\times\{e\}$ as described in Lemma \ref{lem:choiceofFU}.

We fix the same constants as in the beginning of the proof of Theorem~\ref{mainthm}, and let $\sigma$ be the Morse constant for $(\lambda,\lambda)$--quasi-geodesics in a $\delta$--hyperbolic space.  Fix the function $k'\colon [0,\infty)\to[0,\infty)$ so that $G_U$ is $k'$--hierarchically quasiconvex whenever $U\in \s'$, and  let $A$ be the constant from Lemma \ref{lem:gatesequivar} applied to $k'$--hierarchically quasiconvex subspaces.   We further increase $R$ so that $R>\max\{3E+A,E+E\nu+A+\nu,s_0\}$ and $K$ so that 
\begin{equation}\label{eqn:newK}
K=\max\{2\delta, 3R, K_0, \left\lceil\frac{4L+4\delta+E}{T}\right\rceil+2, 2L+2\delta,\frac{6E + A +\sigma + 1}{T},3E+2\sigma\}.
\end{equation}

Let $a,b\in G$ be two infinite order  elements, and suppose there
exists $g\in G$ such that $ga=bg$.   Then $g\B(a)=\B(b)$.  Let $C$ be the container associated to $\B(b)$ in $S$, so that $\B(b)\cup\{C\}=\{B_1,\dots, B_k,C\}$ is a maximal collection of pairwise orthogonal domains.  Since $g$ conjugates $a^i$ to $b^i$ for any $i$, we first replace $a$ and $b$ by sufficiently high powers so that the following conditions are satisfied:
\begin{enumerate}[(a)] 
\item $\B(a)$ and $\B(b)$ are fixed pointwise by $a$ and $b$, respectively; 
\item $b$ has the decomposition $b= b_1\cdots  b_k$ with respect to $\B(b)=\{B_1,\dots, B_k, C\}$, where $ b_i=b_{B_i}$ is as in \eqref{eqn:decompterm};  and
\item  $\tau_V(b)\geq L_S\delta$ for every $V\in\B(b)$.   
\end{enumerate}
 Such powers exist and can be chosen uniformly (that is, depending
 only on the hierarchy constants, and not the choice of elements $a$
 and $b$) by the discussion after Definition \ref{def:axial} in the
 first case, the orthogonal decomposition property and Lemma
 \ref{lem:specialaxial} in the second case, and the assumed bound on 
 translation length in the orthogonal decomposition property
 in the third case.  Lemma \ref{lem:choiceofFU}
 applied to $\B(b)$ provides an element $t\in G$ so that $b_i=tb_i'$,
 where $b_i'\in G_{U_i'}\subseteq \FixedF_{U_i'}$, where $U_i'\in \s'$
 and $U_i=tU_i'$ for all $i=1,\dots, k$.

For each $Z\in\B(b)$, let $\gamma_b^Z$ be a $(2,\ell)$--quasi-geodesic
axis of $b$ in $\fontact Z$.  Then $\gamma_a^{g^{-1}Z}\colon= 
g^{-1}\gamma_b^{Z}$ is an $(2,\ell)$--quasi-geodesic axis of $a$ in
$\fontact (g^{-1}Z)$.
We now fix a quadrilateral of $(\lambda,\lambda)$--hierarchy paths
$\mu(1,g)$, $\mu(1,b^K)$, $b^K\mu(1,g)=\mu(b^K,b^Kg)$, and
$g\mu(1,b^K)=\mu(g,gb^K) =\mu(g,a^Kg)$ in $G$.

\subsubsection*{Step 1: Changing the conjugator.} Our first step is
to replace $g$ with a (possibly) different conjugator whose length we are able to bound in $G$.  We will do this by first premultiplying $g$ by a power of $b_i\in G_{B_i}$ for each $B_i\in \B(b)$.  By the commutative property, any power of $b_i$ commutes with $b$, and so this new element will still conjugate $a$ to $b$.  This is analogous to how we changed the conjugator in the proof of Theorem~\ref{mainthm}, when $\B(a)=\B(b)=\{S\}$.  In that situation, the orthogonal decomposition of $b$ was simply $b=b_S$, and we premultiplied the conjugator by a power of $b$.   In the current situation we need to be a bit more careful because not only may $b$ have more than one term in its orthogonal decomposition, but now $\B(a)$ and $\B(b)$ may be \emph{different} collections of domains.  Because of this, we will need to estimate distances in multiple hyperbolic spaces.  Finally, we will alter $g$ in the clean container $C$ associated to $\B(b)$. 

Fix $Z\in\B(b)$ and let $b_Z=\frak g_{G_Z}(b)$.  Since
$K\geq K_0$, we may apply Lemma \ref{axiallem} to each of the axes
$\gamma_b^Z$ in $\fontact Z$ and $\gamma_a^{g^{-1}Z}$ in $\fontact
(g^{-1}Z)$, and the points $1,a^K\in G$ and $1,b^K\in G$, respectively.
This yields a point $z'\in \gamma^{g^{-1}Z}_a$ and a point $w'\in
\gamma^Z_b$ such that $z'\in\mathcal N_{L}(\pi_{g^{-1}Z}(\mu(1,a^K)))$
and $w'\in \mathcal N_{L}(\pi_{Z}(\mu(1,b^K)))$, where these
neighborhoods are taken in $\fontact g^{-1}Z$ and $\fontact Z$,
respectively.  Moreover, if $x$ is a nearest point on
$\gamma^{g^{-1}Z}_a$ to $\pi_{g^{-1}Z}(1)$ in $\fontact(g^{-1}Z)$ and
$y$ is a nearest point on $\gamma^Z_b$ to $\pi_Z(1)$ in $\fontact
Z$, then $\dist_{g^{-1}Z}(x,z')\leq L$ and $\dist_Z(y,w')\leq L$.  Let
$z\in \pi_{g^{-1}Z}(\mu(1,a^K))$ and $w\in\pi_Z(\mu(1,b^K))$ be 
nearest points to $z'$ and $w'$, respectively, so that
$\dist_{g^{-1}Z}(z,z')\leq L$ and $\dist_Z(w,w')\leq L$.  See Figure
\ref{fig:ThmDCase1}.

Since the isometry $g$ maps $\fontact (g^{-1}Z)$ to $\fontact Z$, we have $gz'\in g\gamma_a^{g^{-1}Z}=\gamma_b^Z$ and $gz\in g\pi_{g^{-1}Z}(\mu(1,a^K))=\pi_Z(\mu(g,ga^K))=\pi_Z(\mu(g,b^Kg))$.  Moreover,  $\dist_Z(gz,gz')=\dist_{g^{-1}Z}(z,z')\leq L$ and $\dist_Z(gz',gx)=\dist_{g^{-1}Z}(z',x)\leq L$.

\begin{figure}[h]
  \centering 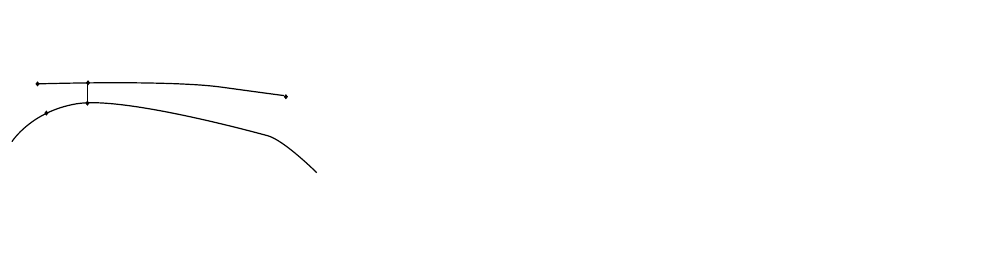
	\caption{The geometry of the axes of $a$ and $b$ in $\fontact g^{-1}Z$ and $\fontact Z$, respectively.} 
	\label{fig:ThmDCase1}
\end{figure}

By possibly premultiplying $g$ by a power of $ b_Z$, we may assume that
$\dist_Z(gz',w')\leq \tau_Z(b)$.  Moreover,  this new element also conjugates $a$ to $b$, because $ b_Z$ commutes with $b$ by the  commutative property.  

We perform the above procedure for each $Z\in\B(b)$ and possibly premultiply $g$ by a (possibly different) power $m_Z$ of each $ b_Z$.   

We now alter $g$ in the clean container $C$ associated to
$\B(b)$.  Let $t\in G$ be as in Lemma~\ref{lem:choiceofFU} applied to $\{B_1,\dots, B_k,C\}$, so that $C=tC'$ for some $C'\in \s'$.  The label of the vertex  $\frak g_{tG_C}(g)$ is $tg_{C'}$, where $g_{C'}\in G_{C'}\subseteq \FixedF_{C'}$.  Let $g_C:= tg_{C'}t^{-1}\in G_C$.

We claim
that $g_C^{-1}g$ conjugates $a$ to $b$ and $\dist_V(1,g_C^{-1}g)\leq
A$ for each $V\nest C$.  The commutative property and
condition (b) ensure that $ g_C^{-1}$ commutes with $b$, hence
$g_C^{-1}g$ conjugates $a$ to $b$.

We have 
\[
\frak g_{\FixedF_{C}}(g_C^{-1}g)\asymp_A g_C^{-1}\mathfrak g_{g_C\FixedF_C}(g)=tg_{C'}^{-1}t^{-1}\frak g_{\FixedF_C}(g),\]
where the first estimate follows from Lemma \ref{lem:gatesequivar} and the second from the definition of $g_C$ and the fact that $g_C\FixedF_C=\FixedF_C$. 

By the $\mathbf F_U$ stabilizers property, $\dist_G(\frak g_{\FixedF_C}(g), \frak g_{G_C}(g))\leq \nu$.  We also have 
\[ tg_{C'}^{-1}t^{-1}\frak g_{G_C}(g)=tg_{C'}^{-1}t^{-1}(tg_{C'})=t.\]
Thus 
\begin{equation}\label{eqn:distttogate}
\dist_G(t,\frak g_{\FixedF_C}(g_C^{-1}g))\leq \dist_G(t, tg_{C'}^{-1}t^{-1}\frak g_{\FixedF_C}(g)) + \dist_G(tg_{C'}^{-1}t^{-1}\frak g_{\FixedF_C}(g),\frak g_{\FixedF_C}(g_C^{-1}g)) \leq \nu+A.
\end{equation}
 By \cite[Remark~1.16]{BehrstockHagenSisto:asdim} and Remark \ref{rem:gates}, we have $\pi_V(\frak g_{\FixedF_C}(g_C^{-1}g))=\pi_V(g_C^{-1}g)$. Since the projection maps $\pi$ are Lipschitz, it thus follows from \eqref{eqn:distttogate} that $\dist_V(t, g_C^{-1}g)\leq A+\nu$ for all $V\nest C$.

By Lemma \ref{lem:choiceofFU}, we have $\dist_G(t,\frak g_{\mathbf P_{\mc U}}(1))\leq E\nu$, where $\mc U=\{B_1,\dots, B_k,C\}$.  
The only domains which are $E$--relevant for $1,\frak g_{\mathbf P_{\mc U}}(1)$ are those which are transverse to some element of $\mc U$ or into which some element of $\mc U$ properly nests by Lemma \ref{lem:disttoprod}.  In particular,  $\dist_V(1,\frak g_{\mathbf P_{\mc U}}(1))\leq E$ for all $V\nest C$.  By the triangle inequality and the fact that the maps $\pi_U$ are Lipschitz,   we have for all $V\nest C$
\[
\dist_V(1,g_C^{-1}g) \leq \dist_V(1,\frak g_{\mathbf P_{\mc U}}(1)) + \dist_V(\frak g_{\mathbf P_{\mc U}}(1), t) + \dist_V(t,g_C^{-1}g) 
 \leq E + E\nu + A + \nu  <R.
\]

This yields a new element $\left(\prod_{Z\in
\B(b)}b_Z^{m_Z}\right)g_C^{-1}g$, which also conjugates $a$ to $b$. 
 We have shown that this new conjugator, which by an abuse of notation we still call $g$, satisfies the following properties:  
\begin{equation}\label{eqn:tauUb}
\dist_Z(y,gx)\leq \dist_Z(y,w')+\dist_Z(w',gz')+\dist_Z(gz',gz)\leq \tau_Z(b)+2L
\end{equation}
for each $Z\in\B(b)$, and 
\begin{equation}\label{eqn:Vbound}
\dist_V(1,g) <R
\end{equation}
whenever $V\nest C$.

\subsubsection*{Step 2: Bounding the length of $g$}
Our goal is to bound the length of $g$ in $G$. As in the proof of Theorem \ref{mainthm}, we will show that for each $U\in\s$, we have 
\begin{equation}\label{eqn:UboundThmD}
\dist_U(1,g)\leq
2K\dist_U(1,b)+\dist_U(g,b^Kg)+K,
\end{equation} where $K$ is as in \eqref{eqn:newK}.  After establishing this bound for
each $U\in \s$, we then apply the distance formula with threshold $R$ and the fact that
$\dist_G(g,b^{K}g)=\dist_G(g,g a^{K})=\dist_G(1,a^{K})$, which, as in the proof of Theorem \ref{mainthm},
establishes that
\begin{equation} \label{eq:step2case2}
\dist_G(1,g)\preceq_{C,D} 2K\dist_G(1,b)+K\dist_G(1,a)+K,
\end{equation}
where $C,D$ are the constants given by the distance formula (Theorem
\ref{thm:distance_formula}).  (Note that by assumption, $R$ is 
sufficiently large to serve as a threshold in the distance formula.)  
This will provide the desired bound in $G$.

\medskip

Fix $U\in\s$. If $U\not\in \relevant(1,g;R)$, then we have $\dist_U(1,g)\leq R\leq K$, and \eqref{eqn:UboundThmD} holds.  Thus we assume for the rest of the proof that $U\in \relevant(1,g;R)$. There are five cases to consider:  there is some $Z\in\B(b)$ such that $U=Z$; there is some $Z\in\B(b)$ such that $U\propnest Z$; there is some $Z\in\B(b)$ such that $U\sqsupsetneq Z$; and there is some $Z\in\B(b)$ such that $U\trans Z$, and $U\perp Z$ for all $Z\in\B(b)$.  \\

\noindent {\bf Cases 1 and 2:} There is some $Z\in\B(b)$ such that 
$U=Z$ or $U\propnest Z$. 

These two cases follow almost exactly as in proof of 
Theorem \ref{mainthm}, the distinction being that $Z$ plays the role of $S$ and we measure distances in both $\fontact (g^{-1}Z)$ and $\fontact Z$.  In Case 2, 
one must also use  
\eqref{eqn:tauUb} in place of \eqref{eqn:taub}. \\

\noindent {\bf Case 3:} There is some $Z\in\B(b)$ such that $U\sqsupsetneq Z$.

By our choice of $K$,  we have $KT\geq E$, and thus $\dist_Z(1,b^K)\geq E$. Applying the bounded geodesic image axiom (Definition \ref{defn:HHS} (\ref{item:dfs:bounded_geodesic_image})) to $\pi_U(\mu(1,b^K))$ in $\fontact U$, we obtain $\rho^Z_U\subseteq\mathcal N_{E+\sigma}(\pi_U(\mu(1,b^K)))$ in $\fontact U$, and hence 
\begin{equation}\label{eq:1rhoZU}
\dist_U(1,\rho^Z_U)\leq \dist_U(1,b^K)+E+\sigma.
\end{equation}  
Additionally, $g^{-1}Z\in \B(a)$ and $g^{-1}U\sqsupsetneq g^{-1}Z$.  The choice of $K$ ensures that  $d_{g^{-1}Z}(1,a^K)\geq E$, so applying the bounded geodesic image axiom to $\pi_U(\mu(1,a^K))$ in $\fontact (g^{-1}U)$ yields 
\[
\rho^{g^{-1}Z}_{g^{-1}U}\subseteq \mathcal N_{E+\sigma}(\pi_U(\mu(1,a^K)))
\]
 in $\fontact (g^{-1}U)$.    Applying the isometry $g$ we obtain 
 \[
 g\rho^{g^{-1}Z}_{g^{-1}U}\subseteq \mathcal N_{E+\sigma}(g\pi_{g^{-1}U}(\mu(1,a^K)))=\mathcal N_{E+\sigma}(\pi_U(\mu(g,b^Kg)))
 \]
  in $\fontact U$. See Figure \ref{fig:case3}.
 \begin{figure}
 \def\svgwidth{2.75in}
  \centering  
\begingroup%
  \makeatletter%
  \providecommand\color[2][]{%
    \errmessage{(Inkscape) Color is used for the text in Inkscape, but the package 'color.sty' is not loaded}%
    \renewcommand\color[2][]{}%
  }%
  \providecommand\transparent[1]{%
    \errmessage{(Inkscape) Transparency is used (non-zero) for the text in Inkscape, but the package 'transparent.sty' is not loaded}%
    \renewcommand\transparent[1]{}%
  }%
  \providecommand\rotatebox[2]{#2}%
  \newcommand*\fsize{\dimexpr\f@size pt\relax}%
  \newcommand*\lineheight[1]{\fontsize{\fsize}{#1\fsize}\selectfont}%
  \ifx\svgwidth\undefined%
    \setlength{\unitlength}{269.02426221bp}%
    \ifx\svgscale\undefined%
      \relax%
    \else%
      \setlength{\unitlength}{\unitlength * \real{\svgscale}}%
    \fi%
  \else%
    \setlength{\unitlength}{\svgwidth}%
  \fi%
  \global\let\svgwidth\undefined%
  \global\let\svgscale\undefined%
  \makeatother%
  \begin{picture}(1,0.51023324)%
    \lineheight{1}%
    \setlength\tabcolsep{0pt}%
    \put(0,0){\includegraphics[width=\unitlength,page=1]{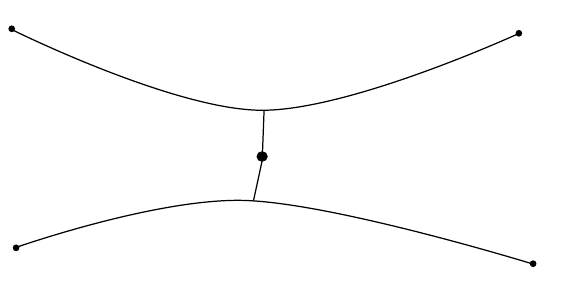}}%
    \put(0.48214139,0.27040752){\makebox(0,0)[lt]{\lineheight{1.25}\smash{\begin{tabular}[t]{l}$\leq E+\sigma$\end{tabular}}}}%
    \put(0.47398626,0.18114152){\makebox(0,0)[lt]{\lineheight{1.25}\smash{\begin{tabular}[t]{l}$\leq E+\sigma$\end{tabular}}}}%
    \put(0.00975806,0.03423095){\makebox(0,0)[lt]{\lineheight{1.25}\smash{\begin{tabular}[t]{l}$\pi_U(1)$\end{tabular}}}}%
    \put(-0.00263828,0.48298937){\makebox(0,0)[lt]{\lineheight{1.25}\smash{\begin{tabular}[t]{l}$\pi_U(g)$\end{tabular}}}}%
    \put(0.78543826,0.47843732){\makebox(0,0)[lt]{\lineheight{1.25}\smash{\begin{tabular}[t]{l}$\pi_U(b^Kg)=\pi_U(ga^K)$\end{tabular}}}}%
    \put(0.91900225,0.00237404){\makebox(0,0)[lt]{\lineheight{1.25}\smash{\begin{tabular}[t]{l}$\pi_U(b^K)$\end{tabular}}}}%
    \put(0.39426123,0.22135489){\makebox(0,0)[lt]{\lineheight{1.25}\smash{\begin{tabular}[t]{l}$\rho^Z_U$\end{tabular}}}}%
  \end{picture}%
\endgroup%

	\caption{Case 3.} 
	\label{fig:case3}
\end{figure} 
  Moreover,  projection maps in a hierarchically hyperbolic group are $G$--equivariant, and so $g\rho^{g^{-1}Z}_{g^{-1}U}=\rho^Z_U$. Thus 
  \begin{equation}\label{eq:grhoZU}
  \dist_U(g,\rho^Z_U)=\dist_U(g, g\rho^{g^{-1}Z}_{g^{-1}U})\leq \dist_U(g,b^Kg)+E+\sigma.
  \end{equation}
      Therefore, by the triangle inequality, \eqref{eq:1rhoZU}, and \eqref{eq:grhoZU}, we have
    \begin{align*}
    \dist_U(1,g)&\leq \dist_U(1,\rho^Z_U) + \diam_{\mc CU}(\rho^Z_U) + \dist(\rho^Z_U,g) \\
    & \leq \dist_U(1,b^K)+3E+2\sigma+\dist_U(g,b^Kg) \\
    &\leq K\dist_U(1,b) +\dist_U(g,b^Kg)+K,
    \end{align*}
where the final inequality follows  because $K\geq 3E+2\sigma$.\\

\noindent {\bf Case 4:} There is some $Z\in\B(b)$ such that $U\trans Z$.

Consider the product region $\mathbf P_Z$, and let 
$\xi=\gate_{\mathbf{P}_Z}(g)$ and $\nu=\gate_{\mathbf P_{Z}}(g)$.
 See Figure \ref{abconjugate}.  
\begin{figure} [h]
\def\svgwidth{4in}  
  \centering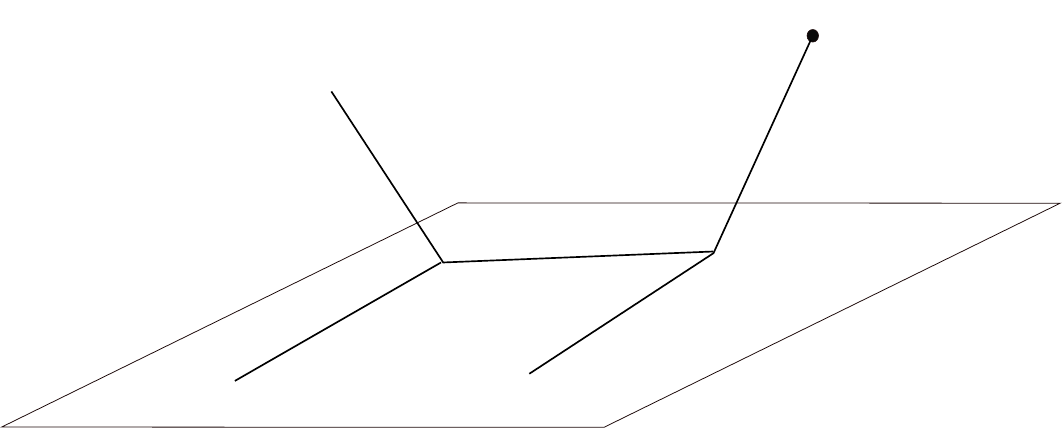 
	\caption{Conjugate elements $a$ and $b$, with conjugator $g$, in $G$.  Solid segments are hierarchy paths, while dotted segments are geodesics.} \label{abconjugate}
\end{figure}

Since we are assuming that $U$ is relevant for $1,g$ and $U\trans Z$, it follows from Lemma \ref{lem:disttoprod} that
$U\in\relevant(1,\xi;R)\cup \relevant(g,\nu;R)$.  As $b$ is loxodromic with respect to the action on
$\fontact Z$ for all $Z\in\B(b)$, we have $\dist_Z(\nu,b^K\nu)\geq
K\tau_Z(b)\geq KT\geq R$.  Thus 
$Z\in\relevant(\nu,b^K\nu; R)$.  Similarly, $Z\in \relevant (\xi,b^K\xi; R)$.  Note that this implies 
$Z\in \relevant(1,b^K;R)\cap\relevant(1,b^K\xi;R)\cap \relevant(g,b^Kg;R)\cap\relevant(g,b^K\nu;R)$, as well.

\begin{claim}\label{claim:Case4}
If $U$ is $R$--relevant for $g,g\nu$, then $U$ is not $R$--relevant for $b^K\nu,b^Kg$.  If $U$ is $R$--relevant for $1,\xi$, then $U$ is not $R$--relevant for $b^K\xi,b^K$.  
\end{claim}

\begin{proof}
We will prove the first statement.  The proof of the second statement is completely analogous.

Since $U$ is relevant for $g,\nu$, the domain $b^KU\in\s$ is relevant
for $b^K\nu,b^Kg$.  Moreover, since $b$ fixes $\B(b)$ pointwise,
we have $b^KZ=Z$.  As $U\trans Z$, we must also have 
$b^KU\trans Z$. From this, we apply the $G$--equivariance of the 
projections maps in a hierarchically 
hyperbolic group to conclude that 
$b^K\rho^U_Z = \rho^{b^KU}_Z$  in $\fontact Z$. 
Thus we have 
\[
\dist_Z(\rho^U_Z,\rho^{b^KU}_Z)=
\dist_Z(\rho^U_Z,b^K\rho^U_Z)\succeq_E K\tau_Z(b)\geq KT.
\]

Since $Z\in \B(b)$, Lemma \ref{lem:disttoprod} implies that $Z$ is not
$s$--relevant for $g,\nu$ for any $s\geq s_0$.  In particular, since $E\geq s_0$, the distance 
between $\pi_Z(g)$ and $\pi_Z(\nu)$ in $\fontact Z$ is bounded by 
$E$.  On the other hand, since $U$ is $R$--relevant for $g$ and 
$\nu$, it follows from \cite[Proposition~5.17]{BehrstockHagenSisto:HHS2} that any hierarchy path  $\mu(g,\nu)$ in $G$ has  a subpath which is contained in the $E$--neighborhood of $\mathbf P_U$.    Since the projection maps $\pi$ are Lipschitz, we have
\begin{equation}\label{eqn:Case4}
\dist_Z(\pi_Z(\mathbf P_U),\pi_Z(\mu(g,\nu)))\leq E.
\end{equation}

Recall that  $\rho^U_Z\asymp_E\pi_Z(\mathbf P_U)$ (see comments after Definition \ref{def:productregion}).  Thus 
\eqref{eqn:Case4} implies:  
\[
\dist_Z(\rho^U_Z,\pi_Z(\mu(g,\nu)))\leq 2E.
\]
Since $\pi_Z(\mu(g,\nu))$ is an unparametrized $(\lambda,\lambda)$--quasigeodesic, it is contained in the $\sigma$--neighborhood of a geodesic in $\fontact Z$ from $\pi_Z(g)$ to $\pi_Z(\nu)$.  By the above discussion, such a geodesic necessarily has length at most $E$.  Therefore,
\[
\dist_Z(\rho^U_Z,g)\leq 3E+\sigma.
\]

By the triangle inequality, we have 
\begin{align*}
\dist_Z(\rho^U_Z,b^Kg)&\geq \dist_Z(g,b^Kg)-\dist_Z(\rho^U_Z,g) \\
& \geq K\tau_Z(b)-\dist_Z(\rho^U_Z,g) \\
& \geq KT-(3E+\sigma)\\
& > 3E + A,
\end{align*}
where the final inequality follows from our choice of 
$K\geq\frac{6E + A +\sigma+1}{T}$. See Figure \ref{fig:Case4}.
\begin{figure}
  \centering 
  \def\svgwidth{3in}
  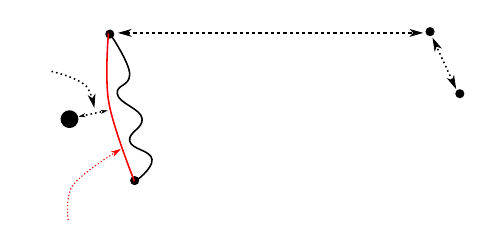
	\caption{The arrangement of points in $\mc CZ$ in the proof of Claim \ref{claim:Case4} in Case 4.} 
	\label{fig:Case4}
\end{figure}
Therefore $\dist_Z(\rho^U_Z,b^Kg)$ is large enough to  apply the
consistency inequalities (Definition \ref{defn:HHS} (\ref{item:dfs_transversal})), yielding 
\begin{equation}\label{eq:b^kgU}
\dist_U(\rho^Z_U,b^Kg)\leq E.\end{equation}

  The same argument bounding the distance in $\fontact Z$ between $g$ 
and $\nu$ applies to show that $Z$ is not $E$--relevant for $b^kg,\frak g_{\mathbf P_Z}(b^Kg)$.  By Lemma \ref{lem:gatesequivar}, we have $b^K\nu\asymp_A \frak g_{\mathbf P_Z}(b^Kg)$.  Therefore $Z$ is not $(E+A)$--relevant for $\nu,b^k\nu$, and so $\pi_Z(b^Kg)\asymp_{E+A} \pi_Z(b^K\nu)$ in $\fontact Z$.  It follows that 
\[\dist_Z(\rho^U_Z,b^K\nu)\asymp_{2E+A} \dist_Z(\rho^U_Z,b^Kg) > 3E + A,\] from which we conclude $\dist_Z(\rho^U_Z,b^K\nu) > E$. Thus we may again apply the consistency inequalities, yielding 
\begin{equation}\label{eqn:rhoZUbKgnu}
\dist_U(\rho^Z_U,b^K\nu)\leq E.
\end{equation}
  Combining this with
 \eqref{eq:b^kgU} and applying the triangle inequality yields
\[
\dist_U(b^Kg,b^K\nu)\leq \dist_U(b^Kg,\rho^Z_U) + \diam_{\mc CU}(\rho^Z_U) + \dist_U(\rho^Z_U,b^K\nu) \leq E+E+E =3E.
\]
Since $R>3E$, we have $U\not\in\relevant(b^K,b^K\nu;R)$.  This completes the proof of the claim.
\end{proof}

Suppose first that $U\in\relevant(1,\xi;R)\cap \relevant(g,\nu;R)$.  Then by the claim, we have that $U\not\in\relevant(b^K,b^K\xi;R)\cup \relevant(b^Kg,b^K\nu;R) $.  By Lemma \ref{lem:disttoprod} and the fact that $U\trans Z$, this is equivalent to 
$U\not\in\relevant(b^K,b^Kg;R)$.  Thus 
$\dist_U(b^K,b^{K}g)<R$, and so  we have:

\[
\dist_U(1,g)\leq \dist_U(1,b^{K})+\dist_U(g,b^{K}g)+R
\leq K\dist_U(1,b)+\dist_U(g,b^{K}g)+K.
\]

Now suppose that $U\not\in\relevant(1,\xi;R)\cap \relevant(g,\nu;R)$.  Since $U\in\relevant(1,\xi;R)\cup \relevant(g,\nu;R)$, we must have either $U\in\relevant(1,\xi;R)$ or $U\in\relevant(g,\nu;R)$. Suppose without loss of generality that $U\in \relevant(1,\xi;R)$ but $U\not\in\relevant(g,\nu;R)$.  It follows from the claim that $U\not\in\relevant(b^K,b^K\xi;R)$.  Moreover, by Lemma \ref{lem:disttoprod}, we have $\dist_U(b^K\xi,\nu)\leq R$.   Therefore:

\begin{align*}
\dist_U(1,g) & \leq \dist_U(1,b^K)+\dist_U(b^K,b^K\xi)+\dist_U(b^K\xi,\nu)+\dist_U(\nu,g) \\
 & \leq K\dist_U(1,b)+R+R+R \\ 
  & \leq K\dist_U(1,b)+K,
\end{align*}
where the final inequality follows because $K\geq 3R$. 
Thus \eqref{eqn:UboundThmD} holds regardless of whether $U\in\relevant(1,\xi;R)\cap \relevant(g,\nu;R)$.\\

\noindent {\bf Case 5: $U\perp Z$ for all $Z\in\B(b)$.}
Note that   $U\perp Z$ for all $Z\in \B(b)$ if and only if $U\nest C$, where $C$ is the container associated to $\B(b)=\{B_1,\dots, B_k\}$.  Thus the bound $\dist_U(1,g)\leq R$ follows immediately from \eqref{eqn:Vbound}.

This completes the proof of the theorem.
\end{proof}

\begin{rem}
Theorem \ref{thmi:SHHGconjugators} establishes the linear conjugator
property for suitable \emph{powers} of pairs of conjugate
infinite order elements.  In particular, the conjugator whose length
we bound in these theorems may not conjugate $a$ to $b$.  There are
two additional steps necessary to extend the ideas in these proofs to
show the linear conjugator property holds for \emph{all} pairs of
conjugate infinite order elements.  First, one would have to deal with
the fact that an element may permute the elements in its big set, an
issue we avoid by passing to a power to assume that the big set is
fixed elementwise.  This is likely not a serious problem.  Second, one
would need to understand the conjugator length function for finite
order elements.  Recall that in the decomposition of $b$ in the proof
of Theorem \ref{thmi:SHHGconjugators}, the factor $b_C$ corresponding
to the container associated to the big set of $b$ was a finite order
element of the corresponding sub-hierarchically hyperbolic group
$(G_C,\s_C)$.  We passed to a power so that we could assume this
factor was trivial.  If we don't pass to a power, we need a different
way to modify the conjugator in that sub-hierarchically hyperbolic
group $G_C$.  To do this, we need to understand conjugators of finite
order elements.  The conjugator length function for finite order
elements of hierarchically hyperbolic groups is unknown, hence this
second step is currently out of reach.
\end{rem}

\bibliographystyle{alpha}
\bibliography{hhs_2015}

\def\cprime{$'$}
\begin{thebibliography}{BHS17b}

\bibitem[ABD21]{AbbottBehrstock:HHSlargest}
Carolyn Abbott, Jason Behrstock, and Matthew~Gentry Durham.
\newblock Largest acylindrical actions and {S}tability in hierarchically
  hyperbolic groups.
\newblock {\em Trans. Amer. Math. Soc. Ser. B}, 8:66--104, 2021.

\bibitem[BD14]{BehrstockDrutu:thick2}
J.~Behrstock and C.~Dru\c{t}u.
\newblock Divergence, thick groups, and short conjugators.
\newblock {\em Illinois J. Math.}, 58(4):939--980, 2014.

\bibitem[BH99]{BridsonHaefliger}
Martin~R. Bridson and Andr{\'e} Haefliger.
\newblock {\em Metric spaces of non-positive curvature}.
\newblock Springer-Verlag, Berlin, 1999.

\bibitem[BHS17a]{BehrstockHagenSisto:asdim}
J.~Behrstock, M.F. Hagen, and A.~Sisto.
\newblock {Asymptotic dimension and small-cancellation for hierarchically
  hyperbolic spaces and groups}.
\newblock {\em Proc. London Math. Soc.}, 114(5):890--926, 2017.

\bibitem[BHS17b]{BehrstockHagenSisto:HHS1}
Jason Behrstock, Mark~F Hagen, and Alessandro Sisto.
\newblock Hierarchically hyperbolic spaces {I}: curve complexes for cubical
  groups.
\newblock {\em Geometry \& Topology}, 21:1731--1804, 2017.

\bibitem[BHS19]{BehrstockHagenSisto:HHS2}
Jason Behrstock, Mark~F Hagen, and Alessandro Sisto.
\newblock Hierarchically hyperbolic spaces {II}: combination theorems and the
  distance formula.
\newblock {\em Pacific J. Math.}, 299(2):257--338, 2019.

\bibitem[BHS21]{BehrstockHagenSisto:quasiflats}
Jason Behrstock, Mark~F Hagen, and Alessandro Sisto.
\newblock Quasiflats in hierarchically hyperbolic spaces.
\newblock {\em Duke Math. J.}, 170(5):909--996, 2021.

\bibitem[BLM83]{BirmanLubotzkyMcCarthy}
J.~Birman, A.~Lubotzky, and J.~McCarthy.
\newblock Abelian and solvable subgroups of the mapping class groups.
\newblock {\em Duke Math. J.}, 50(4):1107--1120, 1983.

\bibitem[BM97]{BurgerMozes:TreeProducts}
M.~Burger and S.~Mozes.
\newblock Finitely presented simple groups and products of trees.
\newblock {\em C. R. Acad. Sci. Paris S\'er. I Math.}, 324(7):747--752, 1997.

\bibitem[BM00]{BurgerMozes:TreeProductsIHES}
Marc Burger and Shahar Mozes.
\newblock Lattices in product of trees.
\newblock {\em Inst. Hautes \'Etudes Sci. Publ. Math.}, (92):151--194 (2001),
  2000.

\bibitem[Bow08]{Bowditch:tight}
Brian~H. Bowditch.
\newblock Tight geodesics in the curve complex.
\newblock {\em Inventiones mathematicae}, 171(2):281--300, 2008.

\bibitem[BR]{BerlyneRussell:HHSgraphproducts}
Daniel Berlyne and Jacob Russell.
\newblock Hierarchical hyperbolicity of graph products.
\newblock {\em {Groups, Geometry, and Dynamics}}.
\newblock To appear.

\bibitem[BR20]{BerlaiRobbio:combinationHHG}
Federico Berlai and Bruno Robbio.
\newblock A refined combination theorem for hierarchically hyperbolic groups.
\newblock {\em Groups Geom. Dyn.}, 14(4):1127--1203, 2020.

\bibitem[Bum15]{Bumagin:relhyplinconj}
Inna Bumagin.
\newblock Time complexity of the conjugacy problem in relatively hyperbolic
  groups.
\newblock {\em Internat. J. Algebra Comput.}, 25(5):689--723, 2015.

\bibitem[Cap17]{Caprace:indiscrete}
Pierre-Emmanuel Caprace.
\newblock Finite and infinite quotients of discrete and indiscrete groups.
\newblock Preprint, \textsc{arXiv:1709.05949}, 2017.

\bibitem[CGW09]{CrispGodelleWiest:ConjRAAGs}
John Crisp, Eddy Godelle, and Bert Wiest.
\newblock The conjugacy problem in subgroups of right-angled {A}rtin groups.
\newblock {\em J. Topol.}, 2(3):442--460, 2009.

\bibitem[Cou16]{Coulon}
R\'{e}mi~B. Coulon.
\newblock Partial periodic quotients of groups acting on a hyperbolic space.
\newblock {\em Ann. Inst. Fourier (Grenoble)}, 66(5):1773--1857, 2016.

\bibitem[DHS17]{DurhamHagenSisto:HHS_boundary}
Matthew Durham, Mark Hagen, and Alessandro Sisto.
\newblock Boundaries and automorphisms of hierarchically hyperbolic spaces.
\newblock {\em Geom. Topol.}, 21(6):3659--3758, 2017.

\bibitem[DHS20]{DurhamHagenSisto:HHS_boundary_symphoria}
Matthew~Gentry Durham, Mark~F. Hagen, and Alessandro Sisto.
\newblock Correction to the article {B}oundaries and automorphisms of
  hierarchically hyperbolic spaces.
\newblock {\em Geom. Topol.}, 24(2):1051--1073, 2020.

\bibitem[HHP20]{HHP:semihyp}
Thomas Haettel, Nima Hoda, and Harry Petyt.
\newblock The coarse {H}elly property, hierarchical hyperbolicity, and
  semihyperbolicity.
\newblock {\em ar{X}iv:2009.14053}, 2020.

\bibitem[HS20]{HagenSusse:HHScubical}
Mark~F. Hagen and Tim Susse.
\newblock On hierarchical hyperbolicity of cubical groups.
\newblock {\em Israel J. Math.}, 236(1):45--89, 2020.

\bibitem[HW08]{HaglundWiseSpecial}
Fr{\'e}d{\'e}ric Haglund and Daniel~T. Wise.
\newblock Special cube complexes.
\newblock {\em Geom. Funct. Anal.}, 17(5):1 551--1620, 2008.

\bibitem[JW09]{JanzenWise}
David Janzen and Daniel~T. Wise.
\newblock A smallest irreducible lattice in the product of trees.
\newblock {\em Algebr. Geom. Topol.}, 9(4):2191--2201, 2009.

\bibitem[Lys89]{Lysenok}
I.~G. Lys\"enok.
\newblock Some algorithmic properties of hyperbolic groups.
\newblock {\em Izv. Akad. Nauk SSSR Ser. Mat.}, 53(4):814--832, 912, 1989.

\bibitem[MM00]{MasurMinsky:II}
Howard~A Masur and Yair~N Minsky.
\newblock Geometry of the complex of curves {II}: Hierarchical structure.
\newblock {\em Geometric and Functional Analysis}, 10(4):902--974, 2000.

\bibitem[Rat07]{Rattaggi}
Diego Rattaggi.
\newblock A finitely presented torsion-free simple group.
\newblock {\em J. Group Theory}, 10(3):363--371, 2007.

\bibitem[RS]{RobbioSpriano:2decomposableHHG}
B.~Robbio and S.~Spriano.
\newblock Hierarchical hyperbolicity of hyperbolic-2-decomposable groups.
\newblock {\em {2007.13383}}.

\bibitem[RST18]{RussellSprianoTran:convexity}
Jacob Russell, Davide Spriano, and Hung~C. Tran.
\newblock Convexity in hierarchically hyperbolic spaces.
\newblock {\em Algebr. Geom. Topol.}, arXiv:1809.09303, 2018.
\newblock To appear.

\bibitem[Rus22]{Russell:relative}
Jacob Russell.
\newblock From hierarchical to relative hyperbolicity.
\newblock {\em Int. Math. Res. Not. IMRN}, (1):575--624, 2022.

\bibitem[Sal14]{Sale:LinConjLie}
Andrew Sale.
\newblock Bounded conjugators for real hyperbolic and unipotent elements in
  semisimple {L}ie groups.
\newblock {\em J. Lie Theory}, 24(1):259--305, 2014.

\bibitem[Sal16]{Sale:GeomConjLamp}
Andrew Sale.
\newblock Geometry of the conjugacy problem in lamplighter groups.
\newblock In {\em Algebra and computer science}, volume 677 of {\em Contemp.
  Math.}, pages 171--183. Amer. Math. Soc., Providence, RI, 2016.

\bibitem[Ser89]{Servatius:AutRAAG}
Herman Servatius.
\newblock Automorphisms of graph groups.
\newblock {\em J. Algebra}, 126(1):34--60, 1989.

\bibitem[Spr18]{Spriano:GraphsHHG}
D.~Spriano.
\newblock {Hyperbolic HHS II: Graphs of hierarchically hyperbolic groups}.
\newblock arXiv:1801.01850, 2018.

\bibitem[Tao13]{Tao:LinConjMCG}
Jing Tao.
\newblock Linearly bounded conjugator property for mapping class groups.
\newblock {\em Geom. Funct. Anal.}, 23(1):415--466, 2013.

\bibitem[Wis07]{Wise:CSC}
Daniel~T Wise.
\newblock Complete square complexes.
\newblock {\em Commentarii Mathematici Helvetici}, 82(4):683--724, 2007.

\end{thebibliography}

\end{document}